\documentclass[a4paper,12pt,leqno]{amsart}
\usepackage[T1]{fontenc}
\usepackage{enumerate}
\usepackage{amsmath,amssymb,amsthm,amsmath}

\linespread{1.24}

\hfuzz=4pt

\usepackage{units}
\usepackage{mathrsfs,mathtools,graphicx,dsfont}
\usepackage{pstricks,pst-node,pst-plot}
\usepackage{etoolbox}

\newtheorem{theorem}{Theorem}[section]
\newtheorem{lemma}[theorem]{Lemma}
\newtheorem{proposition}[theorem]{Proposition}

\newtheorem{corollary}[theorem]{Corollary}

\theoremstyle{definition}
\newtheorem{definition}[theorem]{Definition}
\newtheorem{example}[theorem]{Example}

\theoremstyle{remark}
\newtheorem{remark}[theorem]{Remark}

\numberwithin{equation}{section}

\def\R{{\mathbb R}}

\def\N{{\mathbb N}}

\def\D{{\mathbb D}} 

\def\K{{\mathbb K}}

\def\C{{\mathbb C}}
\def\cD{{\mathcal D}}

\def\cL{{\mathbb L}}

\def\cD{{\mathcal D}}

\def\cH{{\mathcal H}}

\def\cL{{\mathcal L}}

\def\tr{\operatorname{tr}}
\def\Im{\operatorname{Im}}
\def\Re{\operatorname{Re}}

\def\div{\operatorname{div}}

\def\loc{\operatorname{loc}}
\def\diag{\operatorname{diag}}

\title[From forms to semigroups]{From forms to semigroups}

\author{Wolfgang Arendt }
\address{Institute of Applied Analysis,  University of Ulm, D - 89069 Ulm, 
Germany}
\email{wolfgang.arendt@uni-ulm.de}

\author{A. F. M. ter Elst} 
\address{Department of Mathematics, University of Auckland, Private 
Bag 92019, Auckland 1142, New Zealand}
\email{terelst@math.auckland.ac.nz}


\newcommand{\one}{1\hspace{-4.5pt}1}

\makeatletter
\def\eqnarray{\stepcounter{equation}\let\@currentlabel=\theequation
\global\@eqnswtrue
\tabskip\@centering\let\\=\@eqncr
$$\halign to \displaywidth\bgroup\hfil\global\@eqcnt\z@
  $\displaystyle\tabskip\z@{##}$&\global\@eqcnt\@ne
  \hfil$\displaystyle{{}##{}}$\hfil
  &\global\@eqcnt\tw@ $\displaystyle{##}$\hfil
  \tabskip\@centering&\llap{##}\tabskip\z@\cr}

\def\endeqnarray{\@@eqncr\egroup
      \global\advance\c@equation\m@ne$$\global\@ignoretrue}

\def\@yeqncr{\@ifnextchar [{\@xeqncr}{\@xeqncr[5pt]}}
\makeatother

\begin{document}
\bibliographystyle{tom}

\begingroup
\makeatletter
\patchcmd{\@settitle}{\uppercasenonmath\@title}{\Large}{}{}
\patchcmd{\@setauthors}{\MakeUppercase}{\large}{}{}
\makeatother
\maketitle
\endgroup

\section*{Introduction} \label{intro}
Form methods give a very efficient tool to solve evolutionary problems on
Hilbert space. They were developed by T. Kato \cite{Kat1} and, in slightly
different language by J.L. Lions. In this expository article we give an
introduction based on \cite{AE2}. 
The main point in our approach is that the notion of closability is not needed anymore.
The new setting is particularly efficient
for the Dirichlet-to-Neumann operator and degenerate equations. Besides this we
give several other examples. This presentation starts by an introduction to
holomorphic semigroups. Instead of the contour argument found in the literature,
we  give a more direct argument based on the Hille--Yosida theorem.

\section{The Hille--Yosida Theorem}\label{ch:hille}

A \textit{$C_0$-semigroup} on a Banach space $X$ is a mapping $T\colon (0,\infty)\to
\cL(X)$ satisfying
\begin{eqnarray*}
&&T(t+s)=T(t)T(s)\\
&&\lim\limits_{t\downarrow 0} T(t)x=x\qquad (x\in X) \ .
\end{eqnarray*}
The \textit{generator} $A$ of such a $C_0$-semigroup is defined by
\begin{eqnarray*}
D(A)&:=&\{x\in X:\lim\limits_{t\downarrow 0}\frac{T(t)x-x}{t}  \  \mbox{exists}\}\\
Ax&:=& \lim\limits_{t\downarrow 0}\frac{T(t)x-x}{t} \qquad (x \in D(A)) \ .
\end{eqnarray*}
Thus the domain $D(A)$ of $A$ is a subspace of $X$ and $A\colon D(A)\to X$ is
linear. One can show  that $D(A)$ is dense in $X$. The main interest in
semigroups lies in the associated Cauchy problem
\[
{\rm (CP)} \quad \left\lbrace 
\begin{array}{l@{}l@{}l}
\dot{u}(t)&{}={}& Au(t) \qquad (t>0)\\
u(0)&{}={}&x \ .
\end{array} \right.
\]
Indeed, if $A$ is the generator of a $C_0$-semigroup, then given $x\in X$, the
function $u(t):=T(t)x$ is the unique \textit{mild} solution of $(CP)$; i.e.
\[
u\in C([0,\infty);X) \ , \ \int\limits^t_0 u(s)  \, ds  \in D(A)
\]
for all $t > 0$ and
\begin{eqnarray*}
u(t)&=&x+A\int\limits^t_0 u(s) \, ds \\[0pt]
u(0)&=&x \ .
\end{eqnarray*}
If $x\in D(A)$, then $u$ is a \textit{classical solution}; i.e.\ $u \in
C^1([0,\infty);X)$, $u(t)\in D(A)$ for all $t\ge 0$ and $\dot{u}(t)=Au(t)$ for all
$t>0$.
Conversely, if for each $x\in X$ there exists a unique mild solution of $(CP)$,
then $A$ generates a $C_0$-semigroup \cite[Theorem 3.1.12]{ABHN}. In view of
this characterization of well-posedness, it is of big interest to decide
whether a given operator generates a $C_0$-semigroup.
A positive answer is given by the famous Hille--Yosida Theorem.

\begin{theorem} {\rm (Hille--Yosida (1948))}. \label{thm:1.1}
Let $A$ be an operator on $X$. The following are equivalent.
\begin{enumerate}
 \item[(i)]
$A$ generates a contractive $C_0$-semigroup;
  \item[(ii)]
the domain of $A$ is dense, $\lambda-A$ is invertible for some $($all$)$ $\lambda >0$ and
$\|\lambda (\lambda-A)^{-1}\|\le 1$.
\end{enumerate}
\end{theorem}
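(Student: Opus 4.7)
The plan is to prove the two implications separately, with the easy direction going through the Laplace transform and the hard direction through the Yosida approximation.

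For (i) $\Rightarrow$ (ii), I would start by defining, for $\lambda > 0$ and $x \in X$, the bounded operator
\[
R(\lambda)x := \int_0^\infty e^{-\lambda t} T(t)x \, dt,
\]
where the integral converges absolutely because $\|T(t)\| \le 1$. Contractivity then immediately gives $\|\lambda R(\lambda)\| \le 1$. Using the semigroup law, one computes that $T(h) R(\lambda)x - R(\lambda)x = (e^{\lambda h}-1)R(\lambda)x - e^{\lambda h}\int_0^h e^{-\lambda s}T(s)x\, ds$, from which it follows that $R(\lambda)x \in D(A)$ and $(\lambda - A)R(\lambda)x = x$. A similar calculation on $D(A)$ shows $R(\lambda)(\lambda - A)x = x$, so $R(\lambda) = (\lambda-A)^{-1}$. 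For density of $D(A)$, use the standard averaging trick: $x_t := \frac{1}{t}\int_0^t T(s)x\, ds$ lies in $D(A)$ and converges to $x$ as $t \downarrow 0$ by strong continuity.

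For the harder direction (ii) $\Rightarrow$ (i), the plan is to use the Yosida approximation. First I would extend invertibility from some $\lambda_0 > 0$ to all $\lambda > 0$ via the resolvent identity and a connectedness argument, keeping the norm bound $\|\lambda(\lambda-A)^{-1}\| \le 1$. Next, set
\[
A_n := n A(n-A)^{-1} = n^2(n-A)^{-1} - n \, I,
\]
which is a bounded operator. Since $\|n(n-A)^{-1}\| \le 1$, the exponential $T_n(t) := e^{t A_n} = e^{-nt} e^{t n^2 (n-A)^{-1}}$ is a contraction semigroup. For $x \in D(A)$ I would show that $n(n-A)^{-1}x \to x$ (first on $D(A)$ via $n(n-A)^{-1}x - x = (n-A)^{-1}Ax$, then on $X$ by density and uniform boundedness), which gives $A_n x \to Ax$.

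The main obstacle, and the heart of the argument, is to pass from these approximating semigroups to a genuine semigroup and verify that its generator is precisely $A$. The key estimate is that $A_n$ and $A_m$ commute with each other and with $T_n(t), T_m(t)$, so
\[
T_n(t)x - T_m(t)x = \int_0^t \frac{d}{ds}\bigl(T_m(t-s)T_n(s)x\bigr)\, ds,
\]
whence $\|T_n(t)x - T_m(t)x\| \le t\, \|A_n x - A_m x\|$ for $x \in D(A)$. This gives a Cauchy sequence uniform on compacts in $t$, and the limit extends to all of $X$ by density plus the uniform contractivity bound. I then define $T(t)x := \lim_{n\to\infty} T_n(t)x$ and check the semigroup law, strong continuity at $0$, and contractivity, all of which pass to the limit. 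Finally, writing $T_n(t)x - x = \int_0^t T_n(s)A_n x \, ds$ for $x \in D(A)$ and letting $n \to \infty$ identifies $A$ as a restriction of the generator $B$ of $T$. Equality $A = B$ then follows because both $\lambda - A$ and $\lambda - B$ are bijective from their domains onto $X$ for $\lambda > 0$, so the inclusion $A \subset B$ forces $D(A) = D(B)$.
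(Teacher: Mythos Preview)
Your proposal is correct and follows essentially the same route the paper sketches: for (ii)$\Rightarrow$(i) the paper outlines precisely the Yosida approximation $A_\lambda = \lambda A(\lambda-A)^{-1}$, the contractivity of $e^{tA_\lambda}$ via $e^{tA_\lambda}=e^{-\lambda t}e^{t\lambda^2(\lambda-A)^{-1}}$, and the Cauchy property of $(e^{tA_\lambda}x)_\lambda$, which you flesh out with the commutation estimate and the generator identification. The paper does not spell out (i)$\Rightarrow$(ii), and your Laplace-transform argument together with the averaging trick for density is the standard one.
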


Here we  call a semigroup $T$ \textit{contractive} if $\|T(t)\|\le 1$ for all
$t>0$. By
$\lambda-A$ we mean the operator with domain $D(A)$ given by
$(\lambda-A)x:=\lambda x-Ax\quad (x \in D(A))$.
So the condition in (ii) means that $\lambda-A\colon D(A)\to X$  is bijective and
$\|\lambda (\lambda-A)^{-1}x\|\le \|x\|$ for all $\lambda > 0$ and $x\in X$.
If $X$ is reflexive, then this existence of the \textit{resolvent}
$(\lambda-A)^{-1}$ and the contractivity $\|\lambda(\lambda-A)^{-1}\|\le 1$
imply already that the domain is dense \cite[Theorem 3.3.8]{ABHN}.

Yosida's proof is based on the Yosida-approximation: Assuming (ii), one easily
sees that
\[
\lim\limits_{\lambda \to \infty} \lambda (\lambda-A)^{-1}x=x \qquad(x\in D(A)) \ ,
\]
i.e.\ $\lambda (\lambda-A)^{-1}$ converges strongly to the identity as $\lambda
\to \infty$. This implies that
\[
A_\lambda := \lambda A (\lambda-A)^{-1}=\lambda^2(\lambda-A)^{-1}-\lambda
\]
approximates $A$ as $\lambda \to \infty$ in the sense that
\[
\lim\limits_{\lambda \to \infty}A_\lambda x = Ax \qquad (x \in D(A)) \ .
\]
The operator $A_\lambda$ is  bounded, so one may define
\[
e^{tA_\lambda}:=\sum\limits^\infty_{n=0}\frac{t^n}{n!} \, A^n_\lambda
\]
by the power series. Note that $\|\lambda^2 (\lambda-A)^{-1}\| \le \lambda$.
Since
\[
e^{tA_\lambda}=e^{-\lambda t}e^{t \lambda^2(\lambda-A)^{-1}}  \ ,
\]
it follows that
\[
\|e^{tA_\lambda}\| \le e^{-\lambda t} e^{t\| \lambda^2(\lambda-A)^{-1}\|} \le 1
\ .
\]
The key element in Yosida's proof consists in showing that for all $x \in X$ the family
$(e^{tA_\lambda}x)_{\lambda > 0}$ is a
Cauchy net as $\lambda \to \infty$. Then the $C_0$-semigroup generated by $A$
is given by
\[
T(t)x:=\lim\limits_{\lambda \to \infty} e^{tA_\lambda}x \qquad (t>0)
\]
for all $x \in X$.
We will come back to this formula when we talk about holomorphic semigroups.

\begin{remark} \label{rem:1.2} Hille's independent proof is based on Euler's
formula for the
exponential function. Note that putting $t=\frac{1}{\lambda}$ one has
\[
\lambda(\lambda-A)^{-1}=(I-tA)^{-1} \ .
\]
Hille showed that
\[
T(t)x:=\lim\limits_{n\to \infty} (I-\frac{t}{n}A)^{-n}x
\]
exists for all $x \in X$, see \cite[Section~IX.1.2]{Kat1}.
\end{remark}

\section{Holomorphic semigroups}\label{ch:holomorphic}

A $C_0$-semigroup is defined on the real half-line $(0,\infty)$ with values in
$\cL(X)$. It is useful to study when extensions to a sector
\[
\Sigma_\theta :=\{re^{i\alpha}:r>0, \; |\alpha| < \theta \}
\]
for some $\theta \in (0,\pi/2]$ exist. In this section $X$ is a complex Banach
space.

\begin{definition} \label{def2.1}
A $C_0$-semigroup $T$ is called \textit{holomorphic} if there exist $\theta
\in (0,\pi/2]$ and a holomorphic extension
\[
\widetilde{T}\colon \Sigma_\theta \to \cL(X)
\]
of $T$ which is locally bounded; i.e. 
\[
\sup\limits_{\scriptstyle z\in \Sigma_\theta \atop
             \scriptstyle |z| \le 1} \|\widetilde{T}(z)\| < \infty \ .
\]
If $\|\widetilde{T}(z)\|\le 1$ for all $z\in\Sigma_\theta$, then we call $T$ a
\textit{sectorially contractive holomorphic} $C_0$-semigroup (\textit{of angle
$\theta$}, if we want to make precise the angle).
\end{definition}

The holomorphic extension $\widetilde{T}$ automatically has the semigroup property
\[
\widetilde{T}(z_1+z_2)=\widetilde{T}(z_1)\widetilde{T}(z_2) \qquad (z_1,z_2 \in
\Sigma_\theta) \ .
\]
Because of the boundedness assumption it follows that
\[
\lim\limits_{\scriptstyle z\to 0 \atop \scriptstyle z \in \Sigma_\theta} 
\widetilde{T}(z)x=x \qquad (x\in X) \ .
\]
These properties are easy to see. 
Moreover, $\widetilde T$ can be extended continuously
(for the strong operator topology) to the closure of 
$\Sigma_\theta$, keeping these two properties.
In fact, if $x = T(t) y$ for some $t > 0$ and some $y \in X$,
then 
\[
\lim_{w \to z} T(w) x
= \lim_{w \to z} T(w+t) y
= T(z+t) y
\]
exists.
Since the set $ \{ T(t) y : t \in (0,\infty), \; y \in X \} $ is dense 
the claim follows.
In the sequel we will omit the tilde and
denote the extension $\widetilde{T}$ simply by $T$.
We should add a remark on vector-valued holomorphic functions.

\begin{remark}\label{rem2.2}
If $Y$ is a Banach space, $\Omega \subset \C$ open, then a function
$f\colon \Omega\to Y$ is called \textit{holomorphic} if
\[
f^\prime (z)=\lim\limits_{h\to 0}\frac{f(z+h)-f(z)}{h}
\]
exists in the norm of $Y$ for all $z \in \Omega$ and $f^\prime\colon \Omega \to Y$ is
continuous. It follows as in the scalar case that $f$ is analytic. It is
remarkable that holomorphy is the same as weak holomorphy (first observed by
Grothendieck): A function $f\colon \Omega \to Y$  is holomorphic if and only if
\[
y^\prime \circ f\colon \Omega \to \C
\]
is holomorphic for all $y^\prime \in Y'$. In our context the space $Y$ is
$\cL(X)$, the space of all bounded linear operators on $X$ with the operator
norm. If the function $f$ is bounded it suffices to test holomorphy with few
functionals. We say that a subspace $W \subset Y^\prime$ \textit{separates
points} if for all $x\in Y$,
\[
\langle y^\prime,x\rangle = 0 \mbox{ for all } y^\prime \in W \mbox{ implies } x=0
\ .
\]
Assume that $f\colon \Omega \to Y$ is bounded such that $y^\prime \circ f$  is
holomorphic for all $y^\prime \in W$ where $W$  is a  separating subspace of
$Y^\prime$. Then $f$ is holomorphic. This result  is due to \cite{ArN}, see
also \cite[Theorem A7]{ABHN}. In particular, if $Y=\cL(X)$, then a bounded
function $f\colon \Omega \to \cL(X)$ is holomorphic if and only if $\langle
x^\prime,f(\cdot)x\rangle$ is holomorphic for all $x$ in a dense subspace of
$X$ and all $x^\prime$ in a separating subspace of $X^\prime$.
\end{remark}

We recall a special form of Vitali's Theorem (see \cite{ArN}, \cite[Theorem
A5]{ABHN}). 

\begin{theorem} {\rm (Vitali)}. \label{thm:2.3} 
Suppose $\Omega \subset \C$ is connected.
For all $n \in \N$ let $f_n\colon \Omega \to \cL(X)$  be holomorphic,
let $M \in \R$ and suppose that 
\begin{enumerate}
\addtolength{\itemsep}{0.3\baselineskip}
 \item[a)]
$\|f_n(z)\| \le M \qquad$ for all $z \in \Omega$ and $n \in \N$, and;
  \item[b)]
$\Omega_0 := \{ z \in \Omega : \lim_{n \to \infty} f_n(z)x \mbox{ exists for all }
    x \in X \} $
has a limit point in $\Omega$, 
i.e.\ there exist a sequence $(z_k)_{k \in \N}$ in $\Omega_0$ and 
$z_0 \in \Omega$ such that $z_k \neq z_0$ for all $k \in \N$ and 
$\lim\limits_{k\to \infty} z_k=z_0$. 
\end{enumerate}
Then
\[
f(z)x:=\lim\limits_{n\to \infty}f_n(z)x
\]
exists for all $x \in X$ and $z \in \Omega$,
and $f\colon \Omega\to \cL(X)$ is holomorphic.
\end{theorem}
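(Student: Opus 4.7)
The plan is to reduce the problem to a pointwise $X$-valued Cauchy statement, obtain local strong Cauchyness via a Taylor expansion at a limit point of $\Omega_0$, globalise by a connectedness argument, and finally read off holomorphy of the limit from scalar Vitali combined with the Grothendieck criterion of Remark~\ref{rem2.2}.

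Fix $x \in X$ and consider the $X$-valued holomorphic functions $g_n(z) := f_n(z)\,x$, which satisfy $\|g_n(z)\| \le M\|x\|$ on all of $\Omega$. Let $U_x := \{z \in \Omega : (g_n(z))_n \text{ is Cauchy in } X\}$, so that $\Omega_0 \subset U_x$. The heart of the proof will be a local claim: whenever $\overline D(z_0,r) \subset \Omega$ and $(z_k) \subset D(z_0,r/2)\setminus\{z_0\}$ converges to $z_0$ with $z_k \in U_x$ for every $k$, then $(g_n)$ is Cauchy uniformly on $\overline D(z_0,r/2)$, so in particular $\overline D(z_0,r/2) \subset U_x$.

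To prove the local claim I would expand $g_n(z) = \sum_{j\ge 0} a_n^j (z-z_0)^j$ on $D(z_0,r)$, use the Cauchy estimate $\|a_n^j\| \le M\|x\|/r^j$, and show by induction on $j$ that $(a_n^j)_n$ is Cauchy in $X$. Solving the Taylor expansion at $z_k$ gives
\[
  a_n^j = (z_k-z_0)^{-j}\Bigl(g_n(z_k) - \sum_{i<j} a_n^i (z_k-z_0)^i\Bigr) - \sum_{i>j} a_n^i (z_k-z_0)^{i-j},
\]
and the final tail is, uniformly in $n$, bounded by a constant times $|z_k-z_0|/r^{j+1}$. Differencing in $n,m$ and choosing first $k$ large enough to swallow the tail, then $n,m$ large using the inductive hypothesis together with Cauchyness of $(g_n(z_k))_n$, one concludes that $(a_n^j)_n$ is Cauchy. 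A geometric tail estimate for $\sum_{j>K}(a_n^j-a_m^j)(z-z_0)^j$ on $\overline D(z_0,r/2)$ then promotes coefficient-wise Cauchyness to uniform Cauchyness of $g_n$ on that disc.

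For the globalisation, let $V_x := \operatorname{int}(U_x)$. Hypothesis (b) together with one application of the local claim at the limit point $z_0 \in \Omega$ of $\Omega_0$ shows $V_x \neq \emptyset$. If $w$ lies on the boundary of $V_x$ relative to $\Omega$, then $w$ is a limit point of $V_x \subset U_x$, and a second application of the local claim (now at $w$) forces a whole disc around $w$ into $U_x$, contradicting $w \notin V_x$. Hence $V_x$ is open and relatively closed in $\Omega$; since $\Omega$ is connected and $V_x$ non-empty, $V_x = \Omega$, so $U_x = \Omega$. Defining $f(z)x := \lim_n f_n(z)x$ gives a linear map with $\|f(z)\| \le M$. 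Holomorphy is then cheap: scalar Vitali applied to $\langle x', f_n(\cdot)x \rangle$ (scalar holomorphic, bounded by $M\|x\|\|x'\|$, convergent on $\Omega_0$) shows that $\langle x', f(\cdot)x \rangle$ is holomorphic on $\Omega$ for all $x \in X$ and $x' \in X'$, and since the functionals $T \mapsto \langle x', Tx \rangle$ separate points of $\mathcal{L}(X)$, Remark~\ref{rem2.2} combined with the uniform bound $\|f(z)\| \le M$ yields holomorphy of $f\colon \Omega \to \mathcal{L}(X)$. The main obstacle is the inductive Taylor-coefficient step: one must tame the $|z_k-z_0|^{-j}$ blow-up by carefully ordering the quantifiers, first choosing $k$ large to kill the tail uniformly in $n$ and only then taking $n,m$ large to exploit the inductive data and the convergence of $(g_n(z_k))_n$.
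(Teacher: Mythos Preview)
The paper does not actually prove Theorem~\ref{thm:2.3}; it merely states it and defers to \cite{ArN} and \cite[Theorem~A5]{ABHN}. So there is no in-paper argument to compare against, and the relevant question is simply whether your proof stands on its own.

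It does. The inductive Taylor-coefficient argument is correct, including the base case $j=0$: you recover $a_n^0=g_n(z_0)$ not from an assumed convergence at $z_0$ (which you do not have) but from $a_n^0=g_n(z_k)-\sum_{i\ge 1}a_n^i(z_k-z_0)^i$, whose tail is $O(|z_k-z_0|)$ uniformly in $n$ by the Cauchy estimates. The ordering of quantifiers you flag (fix $k$ first to kill the tail, then send $n,m\to\infty$) is exactly right and handles the $|z_k-z_0|^{-j}$ blow-up. The connectedness argument is clean: if $w\in\partial V_x\cap\Omega$, then the approximating points lie in $V_x$ and hence differ from $w$, so the local claim applies at centre $w$ and yields a contradiction. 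One minor remark: once you know $U_x=\Omega$, you can apply the local claim at \emph{every} point of $\Omega$ (using any sequence in $U_x=\Omega$ converging to it), obtaining locally uniform convergence of $g_n$; the limit is then holomorphic by Weierstrass, and the appeal to scalar Vitali at the end, while correct, is not needed. Either way, the bounded weak holomorphy criterion of Remark~\ref{rem2.2} finishes the job. This is essentially the standard proof one finds in \cite{ArN} or \cite[Appendix~A]{ABHN}.
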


Now we want to give a simple characterization of holomorphic sectorially
contractive semigroups.
Assume that $A$ is a densely defined operator on $X$ such that
$(\lambda-A)^{-1}$ exists and
\[
\|\lambda (\lambda-A)^{-1}\| \le 1 \qquad (\lambda \in \Sigma_\theta) \ ,
\]
where $0<\theta \le \pi/2$. Let $z \in \Sigma_\theta$. Then for all $\lambda > 0$,
\[
(zA)_\lambda = zA_{\frac{\lambda}{z}}
\]
is holomorphic in $z$. For each $z\in \Sigma_\theta$, the operator $zA$ 
satisfies Condition~(ii) of Theorem~\ref{thm:1.1}.
 By the Hille--Yosida Theorem
\[
T(z)x:=\lim\limits_{\lambda \to \infty} e^{(zA)_{\lambda}}x
\]
exists for all $x \in X$ and $z \in \Sigma_\theta$.
Since $z \mapsto e^{(zA)_\lambda}=e^{zA_{\lambda/z}}$ is holomorphic, $T\colon \Sigma_\theta \to
\cL(X)$ is holomorphic by Vitali's Theorem.
If $t>0$, then
\[
T(t)=\lim\limits_{\lambda \to \infty} e^{tA_{\lambda/t}}=T_A(t)
\]
where $T_A$ is the semigroup generated by $A$. Since $T_A(t+s)=T_A(t) T_A(s)$, it
follows from  analytic continuation  that
\[
T(z_1+z_2)=T(z_1)T(z_2) \qquad (z_1,z_2 \in \Sigma_\theta) \ .
\]
Thus $A$ generates a sectorially contractive holomorphic $C_0$-semigroup of
angle $\theta$ on $X$. One sees as above that
\[
T_{zA}(t)=T(zt)
\]
for all $t>0$ and $z\in \Sigma_\theta$. We have shown the following.

\begin{theorem}\label{thm:2.4}
Let $A$ be a densely defined operator on $X$ and $\theta \in (0,\pi/2]$. The
following are equivalent.
\begin{enumerate}
 \item[(i)]
$A$ generates a sectorially contractive holomorphic $C_0$-semigroup of angle
$\theta$;
  \item[(ii)]
$(\lambda-A)^{-1}$ exists for all $\lambda \in \Sigma_\theta$ and
\[
\|\lambda(\lambda-A)^{-1}\|\le 1 \qquad (\lambda \in \Sigma_\theta) \ .
\]
\end{enumerate}
\end{theorem}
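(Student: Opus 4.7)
The implication (ii) $\Rightarrow$ (i) is essentially contained in the discussion immediately preceding the theorem: for each $z \in \Sigma_\theta$, the operator $zA$ inherits Condition~(ii) of Theorem~\ref{thm:1.1} (for $\mu > 0$, one has $\mu - zA = z(\mu/z - A)$ with $\mu/z \in \Sigma_\theta$, so $\|\mu(\mu - zA)^{-1}\| = \|(\mu/z)(\mu/z - A)^{-1}\| \le 1$); the Yosida approximants $e^{(zA)_\lambda} = e^{zA_{\lambda/z}}$ are uniformly contractive and holomorphic in $z$, and Vitali's Theorem~\ref{thm:2.3} promotes their strong limit $T(z)$ to a holomorphic $\cL(X)$-valued function on $\Sigma_\theta$; the semigroup identity and the sectorial contractivity then follow from their analogues on $(0,\infty)$ by analytic continuation.

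For (i) $\Rightarrow$ (ii), I would argue by rotation. Given $\lambda \in \Sigma_\theta$, write $\lambda = \mu e^{-i\alpha}$ with $\mu := |\lambda|$ and $\alpha := -\arg \lambda \in (-\theta,\theta)$, and consider $S_\alpha(t) := T(te^{i\alpha})$. Using the semigroup identity on $\Sigma_\theta$, the continuous extension of $T$ to $\overline{\Sigma_\theta}$ established after Definition~\ref{def2.1}, and the contractivity hypothesis, $S_\alpha$ is a contractive $C_0$-semigroup; let $B_\alpha$ denote its generator. By Theorem~\ref{thm:1.1}, $(\mu - B_\alpha)^{-1} \in \cL(X)$ exists with $\|\mu(\mu - B_\alpha)^{-1}\| \le 1$, and the Laplace representation gives $(\mu - B_\alpha)^{-1} y = \int_0^\infty e^{-\mu t} T(te^{i\alpha}) y \, dt$ for all $y \in X$.

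The crux is the identification $(\lambda - A)^{-1} = e^{i\alpha}(\mu - B_\alpha)^{-1}$, after which $\|\lambda(\lambda - A)^{-1}\| = \mu \|(\mu - B_\alpha)^{-1}\| \le 1$ is immediate. The inclusion $e^{i\alpha} A \subseteq B_\alpha$ is easy: for $x \in D(A)$ the identity $\frac{d}{dz} T(z) x = T(z) A x$ holds on $(0,\infty)$ and extends to $\Sigma_\theta$ by analytic continuation, so $\lim_{t \downarrow 0}(T(te^{i\alpha}) x - x)/t = e^{i\alpha} A x$. The main obstacle is the reverse inclusion, i.e.\ showing that $(\mu - B_\alpha)^{-1} y \in D(A)$ for every $y \in X$. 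I would handle this via the smoothing property $T(z) X \subseteq D(A)$ for $z \in \Sigma_\theta \setminus \{0\}$, which itself follows from complex differentiability of $h \mapsto T(z+h) y = T(h) T(z) y$ at $h = 0^+$. Combining this smoothing with an integration by parts on $[\epsilon, T]$ of $\int_\epsilon^T e^{-\mu t} T(te^{i\alpha}) y \, dt$, using $\frac{d}{dt} T(te^{i\alpha}) y = e^{i\alpha} A T(te^{i\alpha}) y$ and closedness of $A$, then letting $\epsilon \downarrow 0$ and $T \to \infty$, yields $(\mu - B_\alpha)^{-1} y \in D(A)$ together with $e^{i\alpha} A (\mu - B_\alpha)^{-1} y = \mu(\mu - B_\alpha)^{-1} y - y$; rearranging gives $(\lambda - A)\, e^{i\alpha}(\mu - B_\alpha)^{-1} = I$ on $X$ and $e^{i\alpha}(\mu - B_\alpha)^{-1}(\lambda - A) = I$ on $D(A)$, completing the identification.
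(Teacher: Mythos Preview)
Your treatment of (ii) $\Rightarrow$ (i) matches the paper's argument exactly; the discussion preceding the theorem \emph{is} the paper's proof of that implication, and you have summarized it accurately.

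For (i) $\Rightarrow$ (ii) the paper gives no explicit argument at all---the sentence ``We have shown the following'' is a slight overstatement, and the converse is left to the reader. Your rotation argument via $S_\alpha(t) = T(te^{i\alpha})$ is correct and fills this gap. One remark: the integration-by-parts computation establishing $(\mu - B_\alpha)^{-1}y \in D(A)$ works, but it is heavier than necessary. Once you have the inclusion $e^{i\alpha}A \subseteq B_\alpha$, you can finish more quickly by noting that $A$ generates a \emph{contractive} $C_0$-semigroup on the real axis, so $\rho(A)$ contains the open right half-plane; since $|\alpha| < \theta \le \pi/2$ we have $\Re(e^{-i\alpha}) > 0$, hence $e^{-i\alpha} \in \rho(A)$ and $(1 - e^{i\alpha}A) = e^{i\alpha}(e^{-i\alpha} - A)$ is bijective. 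As $(1 - B_\alpha)$ is also bijective by Hille--Yosida, the inclusion $e^{i\alpha}A \subseteq B_\alpha$ forces equality (two operators, one contained in the other, sharing a point in both resolvent sets, must coincide). This bypasses the smoothing property and the Laplace-integral manipulation entirely. Either route is fine; yours has the virtue of being self-contained, while the resolvent-set argument is shorter and is the one most readers would expect the paper to have in mind.
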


We refer to \cite{AEH} for a similar approach to possibly noncontractive holomorphic
semigroups.

\section{The Lumer--Phillips Theorem}\label{ch:Lumer}

Let $H$ be a Hilbert space over $\K=\R$ or $\C$. An operator $A$  on $H$ is
called \textit{accretive} or \textit{monotone} if
\[
\Re (A x | x) \ge 0 \qquad (x \in D(A)) \ .
\]
Based on this notion the following very convenient characterization 
is an easy consequence of the Hille--Yosida Theorem.

\begin{theorem} {\rm (Lumer--Phillips)}. \label{thm3.1}
Let $A$ be an operator on $H$. The following are equivalent.
\begin{enumerate}
 \item[(i)]
$-A$ generates a contraction semigroup; 
 \item[(ii)]
$A$ is accretive and $I+A$ is surjective.
\end{enumerate}
\end{theorem}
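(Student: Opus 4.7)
The plan is to derive this directly from the Hille--Yosida Theorem~\ref{thm:1.1} applied to $-A$, via the basic observation that in a Hilbert space accretivity is equivalent to the contractivity estimate $\|(\lambda+A)x\|\ge \lambda\|x\|$ for all $\lambda>0$ and $x\in D(A)$. This equivalence follows by expanding
\[
\|(\lambda+A)x\|^2 = \lambda^2\|x\|^2 + 2\lambda\,\Re(Ax|x) + \|Ax\|^2,
\]
noting that $\|(\lambda+A)x\|^2\ge\lambda^2\|x\|^2$ is immediate from $\Re(Ax|x)\ge 0$, while the converse follows by dividing $2\lambda\,\Re(Ax|x)+\|Ax\|^2\ge 0$ by $\lambda$ and sending $\lambda\to\infty$.

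For the implication (i)$\Rightarrow$(ii), I would apply Theorem~\ref{thm:1.1} to $-A$: the resolvent $(\lambda+A)^{-1}$ exists for every $\lambda>0$ with $\|\lambda(\lambda+A)^{-1}\|\le 1$, which gives surjectivity of $I+A$ (take $\lambda=1$), and the dual estimate above then yields accretivity. Alternatively, one can use the contraction semigroup $T$ generated by $-A$: for $x\in D(A)$ the real function $t\mapsto\|T(t)x\|^2$ is nonincreasing, and differentiating at $t=0$ gives $-2\,\Re(Ax|x)\le 0$.

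For (ii)$\Rightarrow$(i) the work is to verify the three hypotheses of Hille--Yosida for $-A$. First, accretivity combined with the identity above shows that $I+A$ is injective with $\|(I+A)^{-1}\|\le 1$; together with the surjectivity assumption this makes $I+A\colon D(A)\to H$ a bijection with contractive inverse. Density of $D(A)$ is obtained as follows: if $y\in D(A)^\perp$, choose $x\in D(A)$ with $(I+A)x=y$; then $(y|x)=0$ gives $\|x\|^2+\Re(Ax|x)=0$, and accretivity forces $x=0$, hence $y=0$.

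The main obstacle, and the step that actually needs an argument beyond setting $\lambda=1$, is upgrading the surjectivity of $I+A$ to surjectivity of $\lambda+A$ for \emph{all} $\lambda>0$. I plan to use the standard Neumann series/perturbation trick: if $\lambda_0+A$ is invertible with $\|(\lambda_0+A)^{-1}\|\le 1/\lambda_0$, then for $\mu>0$,
\[
\mu+A = \bigl[I+(\mu-\lambda_0)(\lambda_0+A)^{-1}\bigr]\,(\lambda_0+A)
\]
is invertible whenever $|\mu-\lambda_0|<\lambda_0$, i.e.\ on the interval $(0,2\lambda_0)$. Starting from $\lambda_0=1$ and iterating, one covers all of $(0,\infty)$. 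Combined with the accretivity estimate $\|\lambda(\lambda+A)^{-1}\|\le 1$, Theorem~\ref{thm:1.1} applied to $-A$ then gives that $-A$ generates a contraction $C_0$-semigroup, completing the proof.
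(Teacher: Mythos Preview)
Your proof is correct and follows precisely the route the paper indicates: the paper does not give a self-contained proof but refers to \cite[Theorem~3.4.5]{ABHN}, noting only that the result is ``an easy consequence of the Hille--Yosida Theorem'' and recording the key reformulation $\|(\lambda+A)x\|\ge\lambda\|x\|$ of accretivity. You have filled in exactly the details the paper omits --- the equivalence of accretivity with the resolvent estimate, the Neumann-series continuation from $\lambda=1$ to all $\lambda>0$, and the density of $D(A)$ (for which the paper instead appeals to reflexivity via \cite[Proposition~3.3.8]{ABHN}, while you give a short direct orthogonality argument).
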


For a proof, see \cite[Theorem~3.4.5]{ABHN}.
Accretivity of $A$ can be  reformulated by the condition
\[
\|(\lambda+A) x \| \ge \|\lambda x\| \qquad (\lambda > 0, \; x \in D(A))
\ .
\]
Thus if $\lambda + A$ is surjective, then $\lambda + A$ is  invertible and
$\|\lambda (\lambda+A)^{-1}\| \le 1$.
We also say that $A$ is \textit{$m$-accretive} if Condition (ii) is satisfied.
If $A$ is $m$-accretive and $\K=\C$, then one can easily see that $\lambda + A$
is invertible for all $\lambda \in \C$ satisfying $\Re \lambda > 0$ and
\[
\|(\lambda +A)^{-1}\| \le \frac{1}{\Re \lambda} \ .
\]
Due to the reflexivity of Hilbert spaces, each $m$-accretive operator $A$ is
densely defined (see \cite[Proposition 3.3.8]{ABHN}).
Now we want to  reformulate the Lumer--Phillips Theorem for generators of
semigroups which are contractive on a sector.

\begin{theorem} {(generators of sectorially contractive semigroups)}.
\label{thm3.2}
Let $A$ be an operator on a complex Hilbert space $H$ and let $\theta \in
(0,\frac{\pi}{2})$. The following are equivalent.
\begin{enumerate}
 \item[(i)]
$-A$ generates a holomorphic $C_0$-semigroup which is contractive on the
sector $\Sigma_\theta$;
 \item[(ii)]
$e^{\pm i\theta}A$ is accretive and $I+A$ is surjective.
\end{enumerate}
\end{theorem}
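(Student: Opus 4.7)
The plan is to deduce both implications from Theorem~\ref{thm:2.4} applied to $-A$, which characterises generation of a sectorially contractive holomorphic $C_0$-semigroup of angle $\theta$ by the condition that $\mu+A$ be bijective with $\|\mu(\mu+A)^{-1}\|\le 1$ for every $\mu\in\Sigma_\theta$.

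For the implication (i) $\Rightarrow$ (ii) the plan is to read off accretivity from this resolvent estimate. Writing $\mu=re^{i\alpha}$ with $r>0$ and $|\alpha|<\theta$, the factorisation $\mu+A=e^{i\alpha}(r+e^{-i\alpha}A)$ rearranges the estimate into $\|(r+e^{-i\alpha}A)x\|\ge r\|x\|$ for all $x\in D(A)$ and all $r>0$. Expanding the square, dividing by $r$, and letting $r\to\infty$ gives $\Re\bigl(e^{-i\alpha}(Ax|x)\bigr)\ge 0$, so $e^{-i\alpha}A$ is accretive for each $|\alpha|<\theta$; continuity in $\alpha$ then covers $\alpha=\pm\theta$. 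Surjectivity of $I+A$ is the special case $\mu=1$ of Theorem~\ref{thm:2.4}.

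For (ii) $\Rightarrow$ (i) I would verify the resolvent condition of Theorem~\ref{thm:2.4}. The first step is to locate the numerical range of $A$: writing $(Ax|x)=a+ib$, the two accretivity hypotheses give $a\cos\theta\mp b\sin\theta\ge 0$, whence $a\ge 0$ and $|b|\tan\theta\le a$, so the numerical range lies in the closed sector of half-opening $\pi/2-\theta$. In particular $A$ is accretive, and combined with surjectivity of $I+A$ the Lumer--Phillips Theorem~\ref{thm3.1} shows that $-A$ generates a contraction semigroup; thus $A$ is closed, densely defined, and $(0,\infty)\subset\rho(-A)$. For the a priori estimate, note that for $\mu\in\Sigma_\theta$ the angle between $\mu$ and $(Ax|x)$ is at most $\pi/2$, hence $\Re\bigl(\overline{\mu}(Ax|x)\bigr)\ge 0$, which yields $|((\mu+A)x|x)|\ge|\mu|\|x\|^2$ and then $\|(\mu+A)x\|\ge|\mu|\|x\|$ by Cauchy--Schwarz.

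The main remaining task, and the principal obstacle, is to upgrade surjectivity of $I+A$ to surjectivity of $\mu+A$ for \emph{every} $\mu\in\Sigma_\theta$. My plan here is a standard connectedness argument. Let $\Omega:=\{\mu\in\Sigma_\theta:\mu+A\text{ is bijective}\}$. Openness of $\Omega$ follows from a Neumann series perturbation based on the bound $\|(\mu_0+A)^{-1}\|\le|\mu_0|^{-1}$ at any $\mu_0\in\Omega$. Closedness of $\Omega$ in $\Sigma_\theta$ follows from the resolvent identity combined with closedness of $A$: if $\mu_n\to\mu_0\in\Sigma_\theta$ with $\mu_n\in\Omega$ and $y\in H$, then $(\mu_n+A)^{-1}y$ is Cauchy in $H$, and its limit $x$ lies in $D(A)$ with $(\mu_0+A)x=y$. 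Since $\Sigma_\theta$ is convex, hence connected, and already contains $(0,\infty)\subset\Omega$, it follows that $\Omega=\Sigma_\theta$, and Theorem~\ref{thm:2.4} closes the argument.
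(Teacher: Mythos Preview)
Your argument is correct, but both directions take a longer road than the paper does.

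For (ii)$\Rightarrow$(i) the paper also starts by observing that the numerical range of $A$ lies in $\overline{\Sigma_{\pi/2-\theta}}$, so that $zA$ is accretive for every $z\in\overline{\Sigma_\theta}$, and that $A$ is $m$-accretive by Lumer--Phillips. The divergence is in how invertibility of $\mu+A$ on all of $\Sigma_\theta$ is obtained. You run an open--closed connectedness argument, which is fine but superfluous: the paragraph right after Theorem~\ref{thm3.1} already records that an $m$-accretive operator satisfies $\|(\lambda+A)^{-1}\|\le 1/\Re\lambda$ for \emph{every} $\lambda$ with $\Re\lambda>0$, and since $\theta<\pi/2$ one has $\Sigma_\theta\subset\{\Re\lambda>0\}$. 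Equivalently, the paper factors $I+zA=z(z^{-1}+A)$ and notes $\Re(z^{-1})>0$, so each $zA$ is $m$-accretive and Theorem~\ref{thm:2.4} applies immediately. Your a~priori estimate is then the same as theirs, just phrased differently.

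For (i)$\Rightarrow$(ii) the paper avoids the resolvent expansion altogether: if $T$ is contractive on $\Sigma_\theta$ then for each $|\alpha|\le\theta$ the map $t\mapsto T(te^{i\alpha})$ is a contraction $C_0$-semigroup with generator $-e^{i\alpha}A$, so Lumer--Phillips gives that $e^{i\alpha}A$ is $m$-accretive directly. Your route via Theorem~\ref{thm:2.4}, squaring $\|(r+e^{-i\alpha}A)x\|\ge r\|x\|$, and letting $r\to\infty$ is perfectly valid, but the paper's semigroup argument gets there in one line and yields $m$-accretivity (not just accretivity) without a separate limit in $\alpha$.
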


\begin{proof} $(ii)\Rightarrow (i)$.
Since $e^{\pm i\theta}A$ is accretive the operator
$zA$ is accretive for all $z \in
\Sigma_\theta$. Since $(I+A)$ is surjective, the operator $A$ is $m$-accretive.
Thus $(\lambda+A)$ is invertible whenever $\Re \lambda > 0$. Consequently
$(I+zA)=z(z^{-1}+A)$ is invertible for all $z\in\Sigma_\theta$. Thus $zA$ is
$m$-accretive for all $z\in\Sigma_\theta$. Now (i) follows from Theorem
\ref{thm:2.4}.

$(i)\Rightarrow (ii)$.
If $-A$ generates a holomorphic semigroup which is contractive on
$\Sigma_\theta$, then $e^{i\alpha} A$ generates a contraction semigroup for
all $\alpha$ with 
$|\alpha|\le \theta$. Hence $e^{i\alpha} A$ is $m$-accretive whenever $|\alpha| \le
\theta$.
\end{proof}

\section{Forms: the complete case}\label{ch:Forms}

We  recall one of our most efficient tool to solve equations, the Lax--Milgram
lemma, which is just a non-symmetric generalization of the Riesz--Fr\'echet 
representation theorem  from 1905.

\begin{lemma} {\rm (Lax--Milgram (1954))}. \label{lem4.1}
Let $V$ be a Hilbert space over $\K$, where
$\K=\R$ or $\K = \C$, and let $a\colon V\times V \to \K$ be
sesquilinear, continuous and coercive, i.e.
\[
\Re a(u)\ge \alpha \|u\|^2_V \qquad (u\in V)
\]
for some $\alpha > 0$. Let $\varphi \colon V \to \K$ be a continuous anti-linear
form, i.e.\ $\varphi$ is continuous and satisfies
$\varphi(u+v)=\varphi(u)+\varphi(v)$ and 
$\varphi(\lambda u)=\overline{\lambda} \varphi (u)$ for all
$u,v \in V$ and $\lambda \in \K$. Then there is a  unique $u \in V$ such
that
\[
a(u,v)=\varphi (v) \qquad (v\in V) \ .
\]
\end{lemma}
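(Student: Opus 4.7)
The plan is to reduce the Lax--Milgram lemma to the Riesz--Fr\'echet representation theorem by encoding the form $a$ as a bounded linear operator on $V$. First I would apply Riesz--Fr\'echet to the antilinear form $\varphi$ to obtain a unique $w \in V$ with $\varphi(v) = (w \mid v)$ for all $v \in V$. Similarly, for each fixed $u \in V$, the map $v \mapsto a(u,v)$ is a continuous antilinear form on $V$ (using continuity and sesquilinearity of $a$), so Riesz--Fr\'echet yields a unique element, call it $A u \in V$, with $a(u,v) = (A u \mid v)$ for all $v \in V$. Linearity of $u \mapsto A u$ follows from sesquilinearity together with uniqueness in Riesz--Fr\'echet, and boundedness of $A$ follows from the continuity estimate $|a(u,v)| \le M \, \|u\|_V \|v\|_V$, which gives $\|A u\|_V \le M \, \|u\|_V$.

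With this setup, the equation $a(u,v) = \varphi(v)$ for all $v \in V$ is equivalent to $A u = w$. Thus the lemma reduces to showing that $A \colon V \to V$ is bijective. Coercivity gives the key lower estimate: for any $u \in V$,
\[
\alpha \|u\|_V^2 \le \Re a(u,u) = \Re (A u \mid u) \le \|A u\|_V \, \|u\|_V ,
\]
hence $\|A u\|_V \ge \alpha \|u\|_V$. This immediately implies that $A$ is injective and that the range of $A$ is closed (any Cauchy sequence $(A u_n)$ forces $(u_n)$ to be Cauchy, and $A$ is continuous).

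To upgrade closed range to surjectivity I would verify density of the range. Suppose $v \in V$ is orthogonal to $\operatorname{range}(A)$, so $(A u \mid v) = 0$ for all $u \in V$. Taking $u = v$ and using coercivity yields
\[
\alpha \|v\|_V^2 \le \Re a(v,v) = \Re (A v \mid v) = 0 ,
\]
so $v = 0$. Therefore $\operatorname{range}(A) = V$, and $A$ is bijective, giving the unique solution $u = A^{-1} w$. Uniqueness is also immediate directly from coercivity applied to the difference of two purported solutions.

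I expect no serious obstacle: the proof is essentially the standard one, and the only point requiring a touch of care is keeping the linear/antilinear conventions consistent so that Riesz--Fr\'echet applies to $v \mapsto a(u,v)$ with the correct variance, and so that the resulting $u \mapsto A u$ comes out linear (rather than antilinear). Once this bookkeeping is settled, the coercivity inequality does all the work.
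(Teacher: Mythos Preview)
Your proof is correct and is essentially the standard argument for the Lax--Milgram lemma: encode $a$ as a bounded linear operator $A$ via Riesz--Fr\'echet, then use coercivity to show $A$ is injective with closed range and that $(\operatorname{range} A)^\perp = \{0\}$. The bookkeeping you flag about linear/antilinear conventions is indeed the only delicate point, and you have it right.

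As for comparison with the paper: the paper does not actually prove Lemma~\ref{lem4.1}. It is stated as a classical tool (attributed to Lax--Milgram, 1954) and then invoked in the proof of Theorem~\ref{thm:4.2}, but no argument is supplied. So there is nothing to compare against; your proposal simply fills in a gap the authors chose to leave.
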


Of course, to say that $a$ is continuous means that
\[
|a(u,v)|\le M\|u\|_V\|v\|_V\qquad (u,v \in V)
\]
for some constant $M$. We let $a(u):=a(u,u)$ for all $u \in V$.

In general, the range condition in the Hille--Yosida Theorem is difficult to
prove. However, if we look at operators associated with a form, the Lax--Milgram
Lemma implies automatically the range condition.
We describe now our general setting in the complete case.
Given is a Hilbert space $V$ over $\K$ with $\K=\R$ or $\K = \C$,
 and a continuous, coercive
sesquilinear form
\[
a\colon V\times V \to \K \ .
\]
Moreover, we assume that $H$ is another Hilbert space over $\K$ and $j\colon V\to H$
is a  continuous linear mapping with dense image.
Now  we associate an operator $A$ on $H$ with the pair $(a,j)$ in the following
way.
Given $x,y \in H$ we say that $x \in D(A)$ and $Ax=y$ if there exists a $u \in
V$ such that $j(u)=x$ and
\[
a(u,v)=(y|j(v))_H \qquad \mbox{ for all } v \in V \ .
\]
We first show that $A$ is well-defined. Assume that there exist $u_1,u_2 \in V$ and 
$y_1,y_2 \in H$ such that
\begin{eqnarray*}
j(u_1)& =& j(u_2) \ , \\
a(u_1,v) & = & (y_1 | j(v))_H  \qquad ( v \in V) \mbox{, and,}\\
a(u_2,v) & = & (y_2|j(v)_H  \qquad ( v \in V ) \ . 
\end{eqnarray*}
Then $a(u_1-u_2,v)=(y_1-y_2 | j(v))_H$ for all $v\in V$. Since $j(u_1-u_2)=0$,
taking $v:=u_1-u_2$ gives $a(u_1-u_2,u_1-u_2)=0$. Since $a$ is coercive, it
follows that $u_1=u_2$. It follows that $(y_1 | j(v))_H=(y_2 | j (v))_H$ for
all $v \in V$. Since $j$ has dense image, it follows that $y_1=y_2$. 

It is clear from the definition that $A\colon D(A)\to H$ is linear. Our main result
is the following generation theorem. We  first assume that $\K=\C$.

\begin{theorem} \label{thm:4.2} {\rm (generation theorem in the complete case)}.
The operator $-A$ generates a sectorially contractive holomorphic
$C_0$-semigroup~$T$. If $a$ is symmetric, then $A$ is selfadjoint.
\end{theorem}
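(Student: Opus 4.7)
The plan is to verify the hypotheses of Theorem \ref{thm3.2}: produce some $\theta \in (0, \pi/2)$ for which $e^{\pm i\theta} A$ is accretive, and show that $I + A$ is surjective. Self-adjointness in the symmetric case will follow from these same two facts.

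For the accretivity, fix $x \in D(A)$ and choose $u \in V$ with $j(u) = x$ and $a(u, v) = (Ax \mid j(v))_H$ for all $v \in V$. Taking $v = u$ yields $(Ax \mid x)_H = a(u, u)$. The bounds $\Re a(u) \ge \alpha \|u\|_V^2$ and $|a(u)| \le M \|u\|_V^2$ (note $\alpha \le M$) confine the numerical range $\{a(u, u)/\|u\|_V^2 : u \ne 0\}$ to the closed sector $\{z \in \C : |\arg z| \le \theta'\}$ with $\cos \theta' = \alpha/M$, so $\theta' \in [0, \pi/2)$. I pick any $\theta \in (0, \pi/2 - \theta')$ (or any $\theta \in (0, \pi/2)$ if $\theta' = 0$); then $\Re(e^{\pm i\theta} a(u, u)) \ge 0$ for every $u \in V$, so $\Re(e^{\pm i\theta} A x \mid x)_H \ge 0$ for every $x \in D(A)$ and $e^{\pm i\theta} A$ is accretive.

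For surjectivity of $I + A$, fix $y \in H$. The map $\varphi(v) := (y \mid j(v))_H$ is a continuous anti-linear form on $V$, and the form $\tilde a(u, v) := a(u, v) + (j(u) \mid j(v))_H$ is sesquilinear, continuous, and still $\alpha$-coercive since the added term has nonnegative real part. Lax--Milgram (Lemma \ref{lem4.1}) produces a unique $u \in V$ with $\tilde a(u, v) = \varphi(v)$ for all $v \in V$, which rearranges to $a(u, v) = (y - j(u) \mid j(v))_H$. Setting $x := j(u)$ gives $x \in D(A)$ with $Ax = y - x$, hence $(I + A) x = y$. Theorem \ref{thm3.2} now yields that $-A$ generates a sectorially contractive holomorphic $C_0$-semigroup.

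Finally, if $a$ is symmetric then for $x = j(u)$ and $y = j(w)$ in $D(A)$ one has $(Ax \mid y)_H = a(u, w) = \overline{a(w, u)} = \overline{(Ay \mid x)_H} = (x \mid Ay)_H$, so $A \subseteq A^*$. To upgrade to equality, let $z \in D(A^*)$; the surjectivity of $I + A$ furnishes $x \in D(A)$ with $(I + A) x = (I + A^*) z$, and $A \subseteq A^*$ then gives $(I + A^*)(z - x) = 0$. But $\ker(I + A^*) = \operatorname{Ran}(I + A)^{\perp} = \{0\}$, forcing $z = x \in D(A)$ and thus $A = A^*$. The only spot needing any genuine care is the sector-angle bookkeeping in the first step; everything else is immediate from Lax--Milgram and the definition of $A$.
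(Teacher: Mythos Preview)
Your proof is correct and follows essentially the same route as the paper: bound the numerical range of $a$ in a sector to obtain accretivity of $e^{\pm i\theta}A$, then apply Lax--Milgram to the shifted form $a(u,v)+(j(u)\mid j(v))_H$ for the range condition, and invoke Theorem~\ref{thm3.2}. The only cosmetic differences are that the paper estimates $|\Im a(u)|/\Re a(u)\le M/\alpha$ rather than $|a(u)|/\Re a(u)\le M/\alpha$ (your bound is actually sharper) and takes $\theta=\pi/2-\theta'$ rather than a strictly smaller angle; also, the paper's displayed proof omits the self-adjointness argument entirely, whereas you supply the standard $A\subset A^*$ plus surjectivity argument, which is fine.
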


\begin{proof}
Letting $M\ge 0$ be the constant of continuity and $\alpha > 0$ the constant of
coerciveness as before, we have
\[
\frac{| \Im a (v)|}{\Re a (v)} \le \frac{M\|v\|^2_V}{\alpha \|v\|^2_V} =
\frac{M}{\alpha}
\]
for all $v\in V \setminus \{ 0 \} $. 
Thus there exists a $\theta^\prime \in (0,\frac{\pi}{2})$ such
that
\[
a(v)\in \overline{\Sigma_{\theta^\prime}} \qquad (v \in V) \ .
\]
Let $x \in D(A)$. There exists a $u \in V$ such that $x=j(u)$ and $a(u,v)=(Ax |
j(v))_H$ for all $v \in V$. 
In particular, $(Ax | x)_H=a(u)\in \overline{\Sigma_{\theta^\prime}}$. 
It follows that $e^{\pm i\theta}A$ is accretive  where
$\theta=\frac{\pi}{2}-\theta^\prime$. In order to prove the range condition, let
$y\in H$. Consider the form $b\colon V\times V \to \C$ given by
\[
b(u,v)=a(u,v)+(j(u)|j(v))_H \ .
\]
Then $b$ is continuous and coercive. Let $y \in H$. Then
$\varphi(v):=(y|j(v))_H$ defines a continuous anti-linear form $\varphi$ on $V$.
By the Lax--Milgram Lemma~\ref{lem4.1} there exists a unique $u \in V$ such that
\[
b(u,v)=\varphi(v)\qquad (v \in V) \ .
\]
Hence $(y|j(v))_H=a(u,v)+(j(u)|j(v))_H$; i.e.\ $a(u,v)=(y-j(u)|j(v))_H$ for
all $v\in V$. 
This means that $x:= j(u)\in D(A)$ and $Ax=y-x$.
\end{proof}

The result is also valid in real Banach spaces. If $T$ is a $C_0$-semigroup on
a real Banach space $X$, then the $\C$-linear extension $T_{\C}$ of $T$ on the
complexification $X_{\C}:=X\oplus i X$ of $X$ is a $C_0$-semigroup given by
$T_{\C}(t)(x+iy):=T(t)x+iT(t)y$. We call $T$ \textit{holomorphic} if $T_{\C}$ is
holomorphic. The generation theorem above remains true on real Hilbert spaces.

In order to formulate a final result we want also allow a rescaling. Let $X$
be a Banach space over $\K$ and $T$ be a $C_0$-semigroup on $X$ with generator
$A$. Then for all $\omega \in \K$ and $t > 0$ define
\[
T_\omega(t):=e^{\omega t}T(t) \ .
\]
Then $T_\omega$ is a $C_0$-semigroup whose generator is $A+\omega$. Using this we obtain
now the following general generation theorem in the complete case.

Let $V,H$ be Hilbert spaces over $\K$ and $j\colon V\to H$ linear with dense image.
Let $a\colon V\times V\to \K$ be sesquilinear and continuous. We call the form $a$
$j$-\textit{elliptic} if there exist $\omega \in \R$ and $\alpha > 0$ such that
\begin{equation} \label{equ:4.1}
\Re a(u)+\omega \|j(u)\|_H^2 \ge \alpha \|u\|_V^2
\qquad (u \in V) \,
\end{equation}
Then we define the operator $A$ associated with $(a,j)$ as follow.
Given $x,y \in H$ we say that $x \in \D(A)$ and $Ax=y$ if there exists a $u \in
V$ such that $j(u)=x$ and
\[
a(u,v)=(y|j(v))_H \qquad \mbox{ for all } v \in V \ .
\]

\begin{theorem}\label{thm:4.3}
The operator defined in this way is well-defined. Moreover, $-A$ generates a
holomorphic $C_0$-semigroup on $H$.
\end{theorem}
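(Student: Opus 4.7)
The plan is to reduce to the already established complete (coercive) case of Theorem~\ref{thm:4.2} by shifting the form by a multiple of the $H$-inner product, and then to undo this shift at the semigroup level using the rescaling remark preceding the statement.

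\smallskip
First I would establish well-definedness directly from $j$-ellipticity. Suppose $u_1,u_2\in V$ and $y_1,y_2\in H$ satisfy $j(u_1)=j(u_2)$ together with $a(u_i,v)=(y_i\mid j(v))_H$ for all $v\in V$ and $i=1,2$. Setting $u:=u_1-u_2$, one has $j(u)=0$ and $a(u,v)=(y_1-y_2\mid j(v))_H$ for all $v\in V$. Taking $v=u$ gives $a(u,u)=0$, so by \eqref{equ:4.1},
\[
\alpha\|u\|_V^2 \le \Re a(u)+\omega\|j(u)\|_H^2 = 0,
\]
hence $u_1=u_2$. Then $(y_1-y_2\mid j(v))_H=0$ for all $v\in V$, and since $j(V)$ is dense in $H$, $y_1=y_2$.

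\smallskip
Next I would introduce the shifted form $b\colon V\times V\to\K$ defined by $b(u,v):=a(u,v)+\omega\bigl(j(u)\mid j(v)\bigr)_H$. Since $j$ is continuous, $b$ is sesquilinear and continuous, and by \eqref{equ:4.1},
\[
\Re b(u) = \Re a(u)+\omega\|j(u)\|_H^2 \ge \alpha\|u\|_V^2 \qquad (u\in V),
\]
so $b$ is coercive. Let $B$ denote the operator on $H$ associated with $(b,j)$ as in Section~\ref{ch:Forms}. By Theorem~\ref{thm:4.2} (valid also in the real case by the remark following its proof), $-B$ generates a sectorially contractive holomorphic $C_0$-semigroup $S$ on $H$.

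\smallskip
Finally I would identify $A$ with $B-\omega I$. Unwinding the definitions, $x\in D(A)$ with $Ax=y$ means there is $u\in V$ with $j(u)=x$ and $a(u,v)=(y\mid j(v))_H$ for all $v\in V$; this is equivalent to
\[
b(u,v) = a(u,v)+\omega\bigl(j(u)\mid j(v)\bigr)_H = \bigl(y+\omega x\mid j(v)\bigr)_H
\qquad (v\in V),
\]
i.e.\ to $x\in D(B)$ with $Bx=y+\omega x$. Hence $D(A)=D(B)$ and $A=B-\omega I$, so $-A=-B+\omega I$. By the rescaling remark preceding the statement, the semigroup $T(t):=e^{\omega t}S(t)$ is a $C_0$-semigroup on $H$ with generator $-A$, and it is holomorphic because $z\mapsto e^{\omega z}S(z)$ is a locally bounded holomorphic extension to a sector. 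There is no genuine obstacle; the only point to watch is keeping the $\omega$-shift on the correct side when passing between form, operator and semigroup.
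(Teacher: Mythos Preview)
Your proof is correct and follows essentially the same route as the paper: shift the form by $\omega(j(\cdot)\mid j(\cdot))_H$ to obtain a coercive form, apply Theorem~\ref{thm:4.2}, and then undo the shift via the rescaling $T(t)=e^{\omega t}S(t)$. This is exactly the content of Remark~\ref{rem:4.4}, which the paper gives in lieu of an explicit proof.
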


\begin{remark} \label{rem:4.4}
The form $a$ satisfies Condition (\ref{equ:4.1}) if and only if the form
$a_\omega$ given by
\[
a_\omega(u,v)=a(u,v)+\omega(j(u)|j(v))_H
\]
is coercive. If $T_\omega$ denotes the semigroup associated with $(a_\omega,j)$ and
$T$ the semigroup associated with $(a,j)$, then
\[
T_\omega(t)=e^{-\omega t}T(t)\qquad (t > 0) 
\]
as is easy to see.
\end{remark}

\section{The Stokes Operator}\label{ch:stokes}

In this section we show as an example that the Stokes operator is selfadjoint
and generates a holomorphic $C_0$-semigroup. The following approach is due to
Monniaux \cite{Mon1}.
Let $\Omega \subset \R^d$ be  a bounded open set. We  first discuss the
Dirichlet Laplacian.

\begin{theorem} \label{thm:5.1} {\rm (Dirichlet Laplacian)}.
Let  $H=L^2(\Omega)$ and define the operator $\Delta^D$ on $L^2(\Omega)$ by
\begin{eqnarray*}
D(\Delta^D)&=&\{u\in H^1_0(\Omega):\Delta u \in L^2(\Omega)\} \\
\Delta^D u &:=& \Delta u \ .
\end{eqnarray*}
Then $\Delta^D$ is selfadjoint  and generates a holomorphic $C_0$-semigroup on
$L^2(\Omega)$.
\end{theorem}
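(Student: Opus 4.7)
My plan is to apply the generation theorem from Section~4 (Theorem~\ref{thm:4.2}) with the classical Dirichlet form. Set
\[
H := L^2(\Omega), \qquad V := H^1_0(\Omega)
\]
equipped with the $H^1$-inner product, let $j\colon V \to H$ be the canonical inclusion, and define
\[
a(u,v) := \int_\Omega \nabla u \cdot \overline{\nabla v} \, dx \qquad (u,v \in V).
\]
Then $j$ has dense image (since $C_c^\infty(\Omega) \subset V$ is dense in $L^2(\Omega)$), the form $a$ is sesquilinear, symmetric, and continuous, with $|a(u,v)| \le \|u\|_V \|v\|_V$.

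Next I would verify coercivity. Because $\Omega$ is bounded, the Poincar\'e inequality yields a constant $c_P > 0$ with $\|u\|_{L^2} \le c_P \|\nabla u\|_{L^2}$ for every $u \in H^1_0(\Omega)$. Hence
\[
a(u) = \|\nabla u\|_{L^2}^2 \ge \frac{1}{1+c_P^2} \, \|u\|_V^2 \qquad (u \in V),
\]
so $a$ is coercive. By Theorem~\ref{thm:4.2}, the operator $-A$ associated with $(a,j)$ generates a sectorially contractive holomorphic $C_0$-semigroup on $L^2(\Omega)$, and $A$ is selfadjoint because $a$ is symmetric.

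It remains to identify $A = -\Delta^D$. For $u \in V$ and $y \in H$, the relation $a(u,v) = (y \mid j(v))_H$ for all $v \in V$, tested against $v \in C_c^\infty(\Omega)$, gives $-\Delta u = y$ in the distributional sense; since $y \in L^2(\Omega)$, this means $u \in H^1_0(\Omega)$ with $\Delta u \in L^2(\Omega)$, i.e.\ $u \in D(\Delta^D)$, and $A u = y = -\Delta^D u$. Conversely, if $u \in D(\Delta^D)$, then for $v \in C_c^\infty(\Omega)$ the distributional definition of $\Delta u$ yields $\int_\Omega \nabla u \cdot \overline{\nabla v} \, dx = -\int_\Omega (\Delta u)\, \bar v \, dx$; density of $C_c^\infty(\Omega)$ in $H^1_0(\Omega)$ extends this identity to all $v \in V$, showing that $u \in D(A)$ and $Au = -\Delta^D u$. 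Thus $A = -\Delta^D$, concluding the proof.

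The only potentially delicate point is the identification step, and specifically the extension of the integration-by-parts identity from $C_c^\infty(\Omega)$ to $H^1_0(\Omega)$; this is routine once one notes that both sides depend continuously on $v$ in the $H^1_0$-norm (the left by Cauchy--Schwarz with $\nabla u \in L^2$, the right by Cauchy--Schwarz with $\Delta u \in L^2$ and the continuous embedding $V \hookrightarrow H$). No boundary regularity of $\partial \Omega$ is required.
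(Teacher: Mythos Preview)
Your proof is correct and follows essentially the same approach as the paper: you set up the Dirichlet form $a(u,v)=\int_\Omega \nabla u\cdot\overline{\nabla v}$ on $V=H^1_0(\Omega)$, invoke Poincar\'e for coercivity, apply Theorem~\ref{thm:4.2}, and identify $A=-\Delta^D$ by testing against $C_c^\infty(\Omega)$ and extending by density. The only cosmetic difference is that the paper works over $\R$ while you work over $\C$, and you spell out the continuity argument for the density step slightly more explicitly.
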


\begin{proof}
Define $a\colon H^1_0(\Omega)\times H^1_0(\Omega)\to \R$ by
$a(u,v)=\int\limits_\Omega \nabla u \nabla v$. Then $a$ is clearly continuous.
Poincar\'e's inequality says that $a$ is coercive. Consider the injection $j$
of $H^1_0(\Omega)$ into $L^2(\Omega)$. Let $A$ be the operator associated with
$(a,j)$. We show that $A=-\Delta^D$. In fact, let $u \in D(A)$ and write $f = Au$. Then
$\int\limits_\Omega \nabla u \nabla v =\int\limits_\Omega f v$ for all $v \in
H^1_0(\Omega)$. Taking in particular $v \in C^\infty_c(\Omega)$ we see that
$-\Delta u = f$. Conversely, let $u \in H^1_0(\Omega)$ be such that $f:=-\Delta u
\in L^2(\Omega)$. Then $\int\limits_\Omega f \varphi = \int\limits_\Omega
\nabla u \nabla \varphi=a(u,\varphi)$ for all $\varphi \in C^\infty_c(\Omega)$.
This is just the definition of the weak partial derivatives in $H^1(\Omega)$.
Since $C^\infty_c(\Omega)$ is dense in $H^1_0(\Omega)$, it follows that
$\int\limits_\Omega fv=a(u,v)$ for all $v \in H^1_0(\Omega)$. Thus $u \in D(A)$
and $Au=f$.
\end{proof}

For our treatment of the Stokes operator it will be useful to consider the
Dirichlet Laplacian also in $L^2(\Omega)^d=L^2(\Omega)\oplus \ldots \oplus
L^2(\Omega)$.

\begin{theorem} \label{thm:5.2}
Define the symmetric form
$a\colon H^1_0(\Omega)^d \times H^1_0(\Omega)^d \to \R$ by
\[
a(u,v)= \int\limits_\Omega \nabla u \nabla v 
:= \sum\limits^d_{j=1} \int\limits_\Omega \nabla u_j \nabla v_j \ ,
\]
where $u=(u_1,\ldots, u_d)$. Then $a$ is continuous and coercive. 
Moreover, let
$j\colon H^1_0(\Omega)^d\to L^2(\Omega)^d$ be the identity. The operator $A$
associated with $(a,j)$ on $L^2(\Omega)^d$ is given by
\begin{eqnarray*}
D(A)&=&\{u\in H^1_0(\Omega)^d: \Delta u_j \in L^2(\Omega) \mbox{ for all } j \in \{ 1,\ldots, d \} \} \ , \\
Au&=& (-\Delta u_1,\ldots, - \Delta u_d)=:-\Delta u \ .
\end{eqnarray*}
We call $\Delta^D:=-A$ the \textit{Dirichlet Laplacian} on $L^2(\Omega)^d$.
\end{theorem}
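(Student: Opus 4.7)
The plan is to mirror the proof of Theorem~\ref{thm:5.1} coordinatewise, since the form, the injection, and the target operator all split as direct sums over the $d$ components. First I would verify continuity: applying Cauchy--Schwarz to each summand gives
\[
|a(u,v)| \le \sum_{j=1}^d \|\nabla u_j\|_{L^2}\|\nabla v_j\|_{L^2} \le \|u\|_{H^1_0(\Omega)^d}\|v\|_{H^1_0(\Omega)^d},
\]
where I equip $H^1_0(\Omega)^d$ with the natural product Hilbert norm. For coercivity I would invoke Poincar\'e's inequality on each component $u_j \in H^1_0(\Omega)$ separately and sum, obtaining $\alpha \|u\|_{H^1_0(\Omega)^d}^2 \le a(u,u)$ for some $\alpha > 0$. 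Since $j$ is the identity inclusion into $L^2(\Omega)^d$, which is obviously continuous with dense image (as $H^1_0(\Omega)$ is dense in $L^2(\Omega)$ in each coordinate), the hypotheses of Theorem~\ref{thm:4.2} apply, so the operator $A$ associated with $(a,j)$ is well defined and $-A$ generates a holomorphic $C_0$-semigroup (and is selfadjoint by symmetry of $a$).

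The remaining content is the identification $A = -\Delta$ componentwise. For the inclusion $\subset$: suppose $u \in D(A)$ with $Au = f = (f_1,\ldots,f_d) \in L^2(\Omega)^d$. Test with $v = (0,\ldots,\varphi,\ldots,0)$ where $\varphi \in C_c^\infty(\Omega)$ sits in slot $k$; then $\int_\Omega \nabla u_k \nabla \varphi = \int_\Omega f_k \varphi$, which says exactly $-\Delta u_k = f_k$ in $\mathcal{D}'(\Omega)$, and since $f_k \in L^2(\Omega)$, we have $\Delta u_k \in L^2(\Omega)$ with $-\Delta u_k = f_k$. For the reverse inclusion: given $u \in H^1_0(\Omega)^d$ with each $\Delta u_j \in L^2(\Omega)$, set $f_j := -\Delta u_j$. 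For $\varphi \in C_c^\infty(\Omega)$ the definition of the distributional Laplacian gives $\int_\Omega \nabla u_j \nabla \varphi = \int_\Omega f_j \varphi$. By density of $C_c^\infty(\Omega)$ in $H^1_0(\Omega)$, this extends to all test functions $v_j \in H^1_0(\Omega)$. Summing over $j$ yields $a(u,v) = (f \mid j(v))_{L^2(\Omega)^d}$ for every $v \in H^1_0(\Omega)^d$, hence $u \in D(A)$ with $Au = f$.

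There is no real obstacle here beyond careful bookkeeping: the vector-valued case reduces to $d$ independent copies of the scalar argument of Theorem~\ref{thm:5.1}, and the only thing to watch is that one tests separately with single-coordinate test functions to decouple the components. The selfadjointness claim, if desired, is immediate from the symmetry of $a$ and the observation in Theorem~\ref{thm:4.2}.
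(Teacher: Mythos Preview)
Your proposal is correct and is precisely the intended argument: the paper does not supply a separate proof for Theorem~\ref{thm:5.2}, implicitly treating it as the coordinatewise version of the scalar case in Theorem~\ref{thm:5.1}, which is exactly what you have written out.
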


In order to define the Stokes operator we need some preparation. Let
$\cD(\Omega):=C^\infty_c(\Omega)^d$ and let $\cD_0(\Omega):=\{\varphi \in
\cD(\Omega): \div \varphi = 0\}$, where $\div \varphi=\partial_1\varphi_1
+\ldots + \partial_d \varphi_d$ and $\varphi=(\varphi_1,\ldots,\varphi_d)$.
By $\cD(\Omega)^\prime$ we denote the dual space of $\cD(\Omega)$ (with the
usual topology).
Each element $S$ of $\cD(\Omega)^\prime$ can be written in a unique way as
$S=(S_1,\ldots,S_d)$ with $S_j\in C^\infty_c(\Omega)^\prime$ so that
\[
\langle S,\varphi \rangle = \sum\limits^d_{j=1} \langle S_j,\varphi_j \rangle
\]
for all $\varphi=(\varphi_1,\dots,\varphi_d)\in \cD(\Omega)$. 

We say that $S\in H^{-1}(\Omega)$ if there exists a constant $c \geq 0$ such that
\[
|\langle S,\varphi\rangle | \le c \, (\int | \nabla \varphi|^2)^{\frac{1}{2}}
\qquad (\varphi \in \cD(\Omega))
\]
where $| \nabla \varphi|^2=| \nabla \varphi_1|^2 + \ldots + | \nabla
\varphi_d|^2$.
For the remainder of this section we assume that $\Omega$ has Lipschitz
boundary. We need the following result (see \cite[Remark 1.9, p. 14]{Temam}).

\begin{theorem}  \label{thm:5.3}
Let $T\in H^{-1}(\Omega)$. The following are equivalent.
\begin{enumerate}
 \item[(i)]
$\langle T,\varphi \rangle = 0$ for all $\varphi \in \cD_0(\Omega)$;
 \item[(ii)]
there exists a $p\in L^2(\Omega)$ such that $T=\nabla p$.
\end{enumerate}
\end{theorem}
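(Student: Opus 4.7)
The direction (ii) $\Rightarrow$ (i) is a routine integration by parts: for $p \in L^2(\Omega)$ and $\varphi \in \mathcal{D}_0(\Omega)$ one has $\langle \nabla p, \varphi \rangle = -\int_\Omega p \, \div \varphi = 0$ since $\div \varphi = 0$. So the real content is the converse.

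For (i) $\Rightarrow$ (ii), my plan is to view $T$ as a continuous linear form on $H^1_0(\Omega)^d$ (using density of $\mathcal{D}(\Omega)$ in $H^1_0(\Omega)^d$ and the defining estimate of $H^{-1}(\Omega)$) and then construct $p$ via a Riesz representation on $L^2(\Omega)$. The bridge between the two is the divergence map $\div\colon H^1_0(\Omega)^d \to L^2(\Omega)$. I intend to rely on two ingredients that hold precisely because $\partial\Omega$ is Lipschitz:
\begin{enumerate}
\item[(a)] (Bogovski\u{\i}/Ne\v{c}as) The image of $\div\colon H^1_0(\Omega)^d \to L^2(\Omega)$ equals $L^2_0(\Omega) := \{ f \in L^2(\Omega) : \int_\Omega f = 0 \}$, and the induced map $H^1_0(\Omega)^d/\ker(\div) \to L^2_0(\Omega)$ is a topological isomorphism.
\item[(b)] $\mathcal{D}_0(\Omega)$ is dense in $V_0 := \{ \psi \in H^1_0(\Omega)^d : \div \psi = 0 \}$ with respect to the $H^1$-norm.
\end{enumerate}

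Granting these, the argument is short. By (b) and the continuity of $T$ on $H^1_0(\Omega)^d$, the hypothesis extends to $\langle T, \psi \rangle = 0$ for all $\psi \in V_0$. Hence the prescription
\[
\ell(\div \varphi) := -\langle T, \varphi \rangle \qquad (\varphi \in H^1_0(\Omega)^d)
\]
is well-defined on $L^2_0(\Omega)$, and by (a) combined with the open mapping theorem one can choose a preimage $\varphi$ with $\|\varphi\|_{H^1_0} \le c \|\div \varphi\|_{L^2}$, giving continuity of $\ell$ on $L^2_0(\Omega)$. The Riesz--Fr\'echet theorem in the Hilbert space $L^2_0(\Omega)$ then furnishes $p \in L^2_0(\Omega)$ with $\ell(f) = \int_\Omega p f$ for all $f \in L^2_0(\Omega)$. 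Consequently
\[
\langle T, \varphi \rangle = -\int_\Omega p \, \div \varphi = \langle \nabla p, \varphi \rangle \qquad (\varphi \in \mathcal{D}(\Omega)),
\]
which is exactly $T = \nabla p$ in $\mathcal{D}(\Omega)'$.

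The main obstacle is (a); the density statement (b) is essentially a mollification/approximation argument once (a) is available, but (a) itself is the non-trivial theorem of Bogovski\u{\i}/Ne\v{c}as that rests on the Lipschitz regularity assumption. In practice I would invoke it as a black box from \cite{Temam} rather than reprove it here, since the current statement is quoted from exactly that reference; once (a) is taken for granted, the whole proof is a one-line Riesz representation argument modulo the open mapping theorem.
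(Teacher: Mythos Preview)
Your treatment of (ii) $\Rightarrow$ (i) matches the paper exactly. For the converse, the paper does not give a proof at all: after observing that (ii) $\Rightarrow$ (i) is obvious it simply writes ``We omit the other implication'' and defers to \cite[Remark~1.9, p.~14]{Temam}. So there is nothing in the paper to compare your argument against.

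Your sketch of (i) $\Rightarrow$ (ii) is nonetheless correct and is essentially the standard proof one finds in Temam. The only comment worth making is that your ingredient (b) is already recorded in the paper just before Theorem~\ref{thm:5.4} (``One can actually show that $V = \{u \in H^1_0(\Omega)^d : \div u = 0\}$''), and ingredient (a) --- surjectivity of $\div$ onto $L^2_0(\Omega)$ with a bounded right inverse --- is precisely the Ne\v{c}as/Bogovski\u{\i} result that carries all the weight and genuinely requires the Lipschitz hypothesis the paper imposes at the start of the section. Since you explicitly flag (a) as a black box from the same reference the paper cites, your proposal is in effect a faithful unpacking of what the paper chose to omit.
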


Note that Condition (ii) means that
\[
\langle T,\varphi \rangle 
= \sum\limits^d_{j=1} \langle \partial_j p, \varphi_j \rangle
= -\sum\limits^d_{j=1} \langle p,\partial_j \varphi_j \rangle
= - \langle p, \div \varphi \rangle \ .
\]
Now the implication (ii) $\Rightarrow$ (i) is obvious. We omit the other
implication.

Consider the real Hilbert space $L^2(\Omega)^d$ with scalar product
\[
(f|g)=\sum\limits^d_{j=1}(f_j | g_j)_{L^2(\Omega)} = \sum\limits^d_{j=1}
\int\limits_\Omega f_j g_j \ .
\]
We denote by
\[
H:=\cD_0(\Omega)^{\bot\bot}=\overline{\cD_0(\Omega)}
\]
the closure of $\cD_0(\Omega)$ in $L^2(\Omega)^d$. We call $H$ the space of all
\textit{divergence free vectors} in $L^2(\Omega)^d$.
The orthogonal projection $P$ from $L^2(\Omega)^d$ onto $H$ is called the
\textit{Helmholtz projection}.
Now let $V$ be the closure of $\cD_0(\Omega)$ in $H^1(\Omega)^d$. Thus $V \subset
H^1_0(\Omega)^d$ and $\div u = 0$ for all $u \in V$. One can actually show that 
\[
V=\{ u  \in H^1_0(\Omega)^d : \div v = 0 \} \ .
\]
We define the form $a\colon V\times V \to \R$ by
\[
a(u,v)=\sum\limits^d_{j=1}(\nabla u_j | \nabla v_j)_{L^2(\Omega)} \quad
(u=(u_1,\dots,u_d),v=(v_1,\ldots,v_d) \in V) \ .
\]
Then $a$ is continuous and coercive.
The space $V$ is dense in $H$ since it contains $\cD_0(\Omega)$. We consider the
identity $j\colon V\to H$. Let $A$ be the operator associated with $(a,j)$. Then $A$
is selfadjoint and $-A$ generates a holomorphic $C_0$-semigroup.
The operator can be described as follows.

\begin{theorem} \label{thm:5.4}
The operator $A$ has the domain
\[
D(A)=\{u\in V:\exists\, \pi \in L^2(\Omega) \mbox{ such that } - \Delta u +
\nabla \pi \in H\}
\]
and is given by
\[
Au = - \Delta u+\nabla \pi \ ,
\]
where $\pi \in L^2(\Omega)$ is such that $- \Delta u + \nabla \pi \in H$.
\end{theorem}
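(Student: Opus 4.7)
The plan is to prove both set inclusions by testing the form identity $a(u,v) = (f|j(v))_H$ against divergence-free test functions and then invoking Theorem~\ref{thm:5.3} (the de Rham-type lemma) to produce the pressure $\pi$.

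For the inclusion ``$\supseteq$'', suppose $u \in V$ and $f := -\Delta u + \nabla \pi \in H$ for some $\pi \in L^2(\Omega)$. For $v \in \cD_0(\Omega)$, integration by parts together with $\div v = 0$ gives
\[
(f \mid v)_{L^2} = \langle -\Delta u,v\rangle - \langle \pi,\div v\rangle = \sum_{j=1}^d \int_\Omega \nabla u_j \, \nabla v_j = a(u,v).
\]
Both sides are continuous in $v \in V$, and by definition $\cD_0(\Omega)$ is dense in $V$, so the identity extends to all $v \in V$. The $H$-inner product is inherited from $L^2(\Omega)^d$, so this is precisely $a(u,v)=(f|j(v))_H$, giving $u \in D(A)$ and $Au = f$.

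For ``$\subseteq$'', let $u \in D(A)$ and set $f := Au \in H$. Testing the defining identity against $v \in \cD_0(\Omega)$ produces the distributional equation
\[
\langle -\Delta u - f, v\rangle = 0 \qquad (v \in \cD_0(\Omega)).
\]
To apply Theorem~\ref{thm:5.3} I verify that $T := -\Delta u - f$ lies in $H^{-1}(\Omega)$: for $\varphi \in \cD(\Omega)$,
\[
|\langle T,\varphi\rangle| \le \|\nabla u\|_{L^2}\|\nabla \varphi\|_{L^2} + \|f\|_{L^2}\|\varphi\|_{L^2},
\]
and Poincar\'e's inequality on the bounded set $\Omega$ bounds the $L^2$-norm of $\varphi$ by a constant times $\|\nabla \varphi\|_{L^2}$. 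Theorem~\ref{thm:5.3} then furnishes $q \in L^2(\Omega)$ with $T = \nabla q$, and setting $\pi := -q$ yields $-\Delta u + \nabla \pi = f \in H$, as desired.

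The main obstacle is the converse direction: one must cleanly identify the distributional residual $-\Delta u - f$, verify its $H^{-1}$-regularity, and then invoke the nontrivial de Rham-type Theorem~\ref{thm:5.3} (which is exactly where the Lipschitz hypothesis on $\partial \Omega$ enters). The forward direction, by contrast, reduces to integration by parts combined with the density of $\cD_0(\Omega)$ in $V$.
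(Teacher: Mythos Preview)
Your proof is correct and follows essentially the same route as the paper's: both directions are handled by testing against $\cD_0(\Omega)$, extending by density for the inclusion ``$\supseteq$'', and invoking Theorem~\ref{thm:5.3} on the $H^{-1}$-residual $-\Delta u - f$ for ``$\subseteq$''. You are slightly more explicit than the paper in verifying the $H^{-1}$-bound via Poincar\'e, but the argument is otherwise identical.
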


If $u\in H^1_0(\Omega)^d$, then $\Delta u \in H^{-1}(\Omega)$. In fact, for all
$\varphi \in \cD(\Omega)$,
\[
|\langle - \Delta u,\varphi \rangle |
=|\mbox{$-$}\langle u,\Delta \varphi \rangle|
=|\sum\limits^d_{j=1}\int\limits_\Omega \nabla u_j \nabla \varphi_j|
\le \|u\|_{H^1_0(\Omega)^d} \|\varphi\|_{H^1_0(\Omega)^d} \ .
\]

\begin{proof}[Proof of Theorem \ref{thm:5.4}]
Let $u \in D(A)$ and write $f = Au$. Then $f \in H$, $u \in V$ and $a(u,v)=(f|v)_H$ for all
$v\in V$. Thus, the distribution $-\Delta u \in H^{-1}(\Omega)$ coincides with
$f$ on $\cD_0(\Omega)$. By Theorem \ref{thm:5.3} there exists a 
$\pi \in L^2(\Omega)$ such that $-\Delta u + \nabla \pi = f$. 
Conversely, let $u \in V$, $f \in H$, $\pi \in L^2(\Omega)$ and suppose that 
$-\Delta u + \nabla \pi=f$ in $\cD(\Omega)^\prime$. Then
for all $\varphi \in \cD_0(\Omega)$,
\[
a(u,\varphi)=\int\limits_\Omega \nabla u \nabla \varphi = \int\limits_\Omega
\nabla u \nabla \varphi + \langle \nabla \pi,\varphi \rangle
= (f|\varphi)_{L^2(\Omega)^d} \ .
\]
Since $\cD_0(\Omega)$ is dense in $V$, it follows that
$a(u,\varphi)=(f|\varphi)_{L^2(\Omega)^d}$ for all $\varphi \in V$. Thus, $u
\in D(A)$ and $Au=f$.
\end{proof}

The operator $A$ is called the \textit{Stokes operator}. We refer to
\cite{Mon1} for this approach and further results on the Navier--Stokes
equation.
We conclude this section by giving an example where $j$ is not injective.
Further examples will be seen in the sequel.

\begin{proposition} \label{prop:5.5}
Let $\widetilde{H}$ be a Hilbert space and $H\subset \widetilde{H}$ a closed subspace.
Denote by $P$ the orthogonal projection onto $H$. Let $\widetilde{V}$ be a Hilbert
space which is continuously and densely embedded into $\widetilde{H}$ and let
$a\colon \widetilde{V}\times \widetilde{V} \to \R$ be a continuous, coercive form. Denote by
$A$ the operator on $\widetilde{H}$ associated with $(a,j)$ where $j$ is the
injection of $\widetilde{V}$ into $\widetilde{H}$ and let $B$ be the operator on $H$
associated with $(a,P \circ j)$. Then
\begin{eqnarray*}
D(B)&=&\{Pw:w\in D(A) \mbox{ and } Aw \in H\} \ , \\
BPw&=&Aw \qquad (w \in D(A), \; Aw \in H) \ .
\end{eqnarray*}
\end{proposition}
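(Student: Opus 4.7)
The plan is to show both set equalities by unwinding the two defining conditions and using the self-adjointness of $P$. First I would check that $B$ is well-defined, i.e.\ that the pair $(a, P\circ j)$ fits into the framework of Theorem~\ref{thm:4.2}: the map $P\circ j\colon \widetilde{V}\to H$ is continuous (since $P$ is a contraction), and it has dense image because $j(\widetilde{V})$ is dense in $\widetilde{H}$ and $P$ is surjective onto $H$, so $P j(\widetilde{V})$ is dense in $P\widetilde{H}=H$. Coercivity of $a$ is inherited directly.

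The key observation that drives everything is the identity
\[
(y \,|\, Pj(v))_H = (y \,|\, j(v))_{\widetilde{H}} \qquad (y \in H,\ v \in \widetilde{V}),
\]
which follows from $P=P^{*}$ and $Py=y$. Using this, I would first take $x\in D(B)$ with $Bx=y$. By definition of $B$ there is a $u\in\widetilde{V}$ with $Pj(u)=x$ and $a(u,v)=(y \,|\, Pj(v))_H$ for all $v\in\widetilde{V}$. Rewriting the right-hand side via the identity above yields $a(u,v)=(y\,|\,j(v))_{\widetilde{H}}$ for all $v\in\widetilde{V}$, which is exactly the condition that $w:=j(u)\in D(A)$ with $Aw=y$. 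Since $y\in H$, this shows $x=Pw$ with $w\in D(A)$ and $Aw\in H$, and $BPw=y=Aw$.

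For the converse inclusion I would take $w\in D(A)$ with $Aw\in H$, and let $u\in\widetilde{V}$ be such that $j(u)=w$ and $a(u,v)=(Aw \,|\, j(v))_{\widetilde{H}}$ for all $v\in\widetilde{V}$. Setting $y:=Aw\in H$ and applying the identity in the reverse direction gives $a(u,v)=(y \,|\, Pj(v))_H$ for all $v\in\widetilde{V}$, so the pair $(u,y)$ witnesses $Pw=Pj(u)\in D(B)$ with $BPw=y=Aw$. Combined with the first step this proves both claims.

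The only thing that needs care is the observation that for $y\in H$ the scalar product against $Pj(v)$ in $H$ equals the scalar product against $j(v)$ in $\widetilde{H}$; everything else is a direct translation between the two defining conditions, so I do not expect a real obstacle. Well-definedness of $B$ (equivalently, the independence of $BPw=Aw$ from the choice of $w$ representing $Pw$) is guaranteed abstractly by the framework of Section~\ref{ch:Forms}, but it is also transparent from the argument above: different $w$'s yield the same $y$ since both satisfy $a(u,v)=(y\,|\,j(v))_{\widetilde{H}}$ on a dense set.
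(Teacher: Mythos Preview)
Your proof is correct and is precisely the natural argument the paper has in mind when it says ``This is easy to see''; the only substantive ingredient is the identity $(y\,|\,Pj(v))_H=(y\,|\,j(v))_{\widetilde H}$ for $y\in H$, and once that is in place both inclusions are direct translations between the defining conditions for $A$ and $B$, exactly as you do.
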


This is easy to see. In the context considered in this section we obtain the
following example.

\begin{example}\label{ex:5.6}
Let
$\widetilde{H}=L^2(\Omega)^d$, $H=\overline{\cD_0(\Omega)}$ and $\widetilde{V}
:=H^1_0(\Omega)^d$.
Define $a \colon \widetilde V \times \widetilde V \to \R$ by
\[
a(u,v)=\int\limits_\Omega \nabla u \nabla v \ .
\]
Moreover, define $j \colon \widetilde{V}\to \widetilde{H}$ by $j(u)=u$. 
Then the operator associated with $(a,j)$ is
$A=-\Delta^D$ as we have seen in Theorem \ref{thm:5.2}. Now let $P$ be the
Helmholtz projection and $B$ the operator associated with $(a,P)$. Then
\begin{eqnarray*}
D(B) = \{u\in H & :& \exists\, \pi \in L^2(\Omega) \mbox{ such that }  \\*
& & u+\nabla \pi \in D(\Delta^D)
\mbox{ and } \Delta (u+\nabla \pi) \in H\}
\end{eqnarray*}
and 
\[
Bu = - \Delta (u+\nabla \pi) \ ,
\]
if $\pi \in L^2(\Omega)$ is such that  $u+\nabla \pi \in D(\Delta^D)$
and $\Delta (u+\nabla \pi) \in H$.
This follows directly from Proposition \ref{prop:5.5} and Theorem~\ref{thm:5.3}.
Thus, the operator $B$ is selfadjoint and generates a holomorphic semigroup.
\end{example}

\section{From forms to semigroups: the incomplete case}

In the  preceding sections we considered forms which were defined on a Hilbert
space $V$. Now we want to study a purely algebraic condition
considering forms whose domain is an arbitrary vector space.
 At first we
consider the complex case.
Let $H$  be a complex Hilbert  space. A \textit{sectorial form} on $H$ is a
sesquilinear form
\[
a\colon D(a)\times D(a)\to \C \ ,
\]
where $D(a)$ is a vector space,
together with a linear mapping $j\colon D(a)\to H$ with dense image such that there
exist $\omega \ge 0$ and $\theta \in (0,\pi/2)$ such that
\[
a(u)+\omega \|j(u)\|^2_H \in \Sigma_\theta \qquad (u \in D(a)) \ .
\]
If $\omega=0$, then we call the form \textit{$0$-sectorial}.
To  a sectorial form, we associate an operator $A$ on $H$ by defining for all $x,y \in H$
that  $x \in D(A)$ and $Ax=y :\Leftrightarrow$ there exists a sequence 
$(u_n)_{n \in \N}$ in $D(a)$ such that
\begin{enumerate}
\addtolength{\itemsep}{0.3\baselineskip}
 \item[a)] 
$\lim\limits_{n\to \infty}j(u_n)=x$ in $H$;
 \item[b)]
$\sup\limits_{n\in \N}\Re a (u_n) < \infty$, and;
 \item[c)]
$\lim\limits_{n\to \infty}a(u_n,v)=(y|j(v))_H$ for all $v \in D(a)$.
\end{enumerate}

It is part of the next theorem that the operator $A$ is well-defined
(i.e.\ that $y$ depends only on $x$ and not on the choice of the sequence
satisfying a), b) and c)).
We only consider single-valued operators in this article.

\begin{theorem}\label{thm:6.1}
The operator $A$ associated with a sectorial form $(a,j)$ is well-defined and $-A$
generates a holomorphic $C_0$-semigroup on $H$.
\end{theorem}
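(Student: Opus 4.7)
The plan is to reduce Theorem~\ref{thm:6.1} to the complete case of Theorem~\ref{thm:4.3} by constructing a Hilbert space completion of $D(a)$ on which the sectorial form becomes $j$-elliptic. First I would eliminate $\omega$. Replacing $a$ by $a_\omega(u,v) := a(u,v) + \omega(j(u)|j(v))_H$ turns the sectorial form into a $0$-sectorial one. Since $j(u_n) \to x$ in $H$ from condition a), the boundedness conditions in b) for $a$ and for $a_\omega$ are equivalent, so conditions a)--c) for $a$ match those for $a_\omega$ with $y$ replaced by $y + \omega x$. Hence the associated operators satisfy $A_\omega = A + \omega I$, and $-A$ generates a holomorphic semigroup iff $-(A+\omega I)$ does. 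I may therefore assume $a(u) \in \overline{\Sigma_\theta}$ for every $u \in D(a)$.

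On $D(a)$ I would consider the Hermitian sesquilinear form
\[
   (u|v)_V := \tfrac{1}{2}\bigl(a(u,v) + \overline{a(v,u)}\bigr) + (j(u)|j(v))_H
\]
with seminorm $\|u\|_V^2 = \Re a(u) + \|j(u)\|_H^2$. The technical heart of the proof is a Cauchy--Schwarz inequality for sectorial forms: there exists a constant $C_\theta > 0$ with $|a(u,v)| \le C_\theta \|u\|_V \|v\|_V$ for all $u,v \in D(a)$, obtained by splitting $a$ into its Hermitian and skew-Hermitian parts and using that the imaginary part of $a(u,u)$ is bounded by $(\tan\theta)$ times its real part. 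This has two consequences: the null space $N := \{u \in D(a) : \|u\|_V = 0\}$ satisfies $a(u,v) = a(v,u) = 0$ whenever $u \in N$ and $v \in D(a)$, so $a$ descends to a continuous sesquilinear form on $D(a)/N$, and $j$ factors through the quotient. Let $\widehat{V}$ be the completion of $D(a)/N$ with respect to $\|\cdot\|_V$; by continuity $a$ and $j$ extend to a continuous sesquilinear form $\widehat{a}$ on $\widehat{V}$ and a continuous linear map $\widehat{j} : \widehat{V} \to H$, and $\widehat{j}(\widehat{V})$ remains dense in $H$. Since $\Re \widehat{a}(u) + \|\widehat{j}(u)\|_H^2 = \|u\|_V^2$ on $\widehat{V}$, the form $\widehat{a}$ is $\widehat{j}$-elliptic (with $\omega = \alpha = 1$), and Theorem~\ref{thm:4.3} yields an operator $\widehat{A}$ on $H$ such that $-\widehat{A}$ generates a holomorphic $C_0$-semigroup.

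It remains to identify the sequentially defined operator $A$ with $\widehat{A}$, which also shows that $A$ is well-defined. Given a sequence $(u_n)$ in $D(a)$ satisfying a)--c), the classes $[u_n]$ are bounded in $\widehat{V}$ by a) and b), so a subsequence converges weakly to some $\widehat{u} \in \widehat{V}$; weak continuity of $\widehat{j}$ combined with a) gives $\widehat{j}(\widehat{u}) = x$, and passing to the limit in c) against $v \in D(a)$ and then extending by density gives $\widehat{a}(\widehat{u},v) = (y|\widehat{j}(v))_H$ for all $v \in \widehat{V}$, so $x \in D(\widehat{A})$ with $\widehat{A}x = y$. Conversely, if $x \in D(\widehat{A})$ with representative $\widehat{u} \in \widehat{V}$, approximating $\widehat{u}$ in $\|\cdot\|_V$ by elements of $D(a)$ produces a sequence in $D(a)$ that verifies a)--c) with $y = \widehat{A}x$.

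The chief obstacle is the Cauchy--Schwarz inequality for sectorial forms; it is what makes both the passage to the quotient (well-definedness of $a$ on $D(a)/N$) and the continuous extension of $a$ to the completion go through. A secondary technical point is the weak-compactness step, which extracts the representative $\widehat{u}$ from the mere boundedness condition b) without requiring actual $\widehat{V}$-convergence of $(u_n)$, and so is essential for identifying $A$ with $\widehat{A}$.
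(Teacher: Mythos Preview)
Your proposal is correct and follows precisely the strategy the paper sketches: reduce to the complete case by forming a Hilbert space completion of $D(a)$ and then invoke Theorem~\ref{thm:4.3}, exploiting that $\widehat{j}$ need not be injective. The paper does not give the details (it refers to \cite[Theorem~3.2]{AE2}), and what you have written is exactly the argument one finds there: the shift to $\omega=0$, the Cauchy--Schwarz inequality for $0$-sectorial forms (which is indeed the technical core), the quotient by the null space $N$, the completion, and the identification $A=\widehat{A}$ via weak sequential compactness in $\widehat{V}$.
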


The proof of the theorem consists in a reduction to the complete case by
considering an appropriate completion of $D(a)$. Here it is important that in
Theorem \ref{thm:4.2} a non-injective mapping $j$ is allowed. 
For a proof we refer to \cite[Theorem~3.2]{AE2}.

If $C\subset H$ is  a closed convex set, we say that $C$ is \textit{invariant}
under a semigroup $T$ if
\[
T(t)C\subset C \qquad (t > 0) \ .
\]
Invariant sets are important to study positivity, $L^\infty$-contractivity, and
many more properties. If the semigroup is associated with a form, then the
following criterion, \cite[Proposition~3.9]{AE2}, is convenient.

\begin{theorem} {\rm (invariance)}. \label{thm:6.2}
Let $C\subset H$ be a  closed convex set and let $P$ be the orthogonal
projection onto $C$. 
Then the semigroup $T$ associated with a sectorial form $(a,j)$
on $H$ leaves $C$ invariant if and only if for each $u\in D(a)$ there exists a
sequence $(w_n)_{n\in\N}$ in $D(a)$ such that
\begin{enumerate}
\addtolength{\itemsep}{0.3\baselineskip}
 \item[a)] 
$\lim\limits_{n\to \infty} j(w_n)=Pj(u)$ in $H$;
 \item[b)]
$\limsup\limits_{n\to \infty} \Re a (w_n,u-w_n) \ge 0$, and;
 \item[c)]
$\sup\limits_{n\in \N} \Re a (w_n) < \infty$.
\end{enumerate}
\end{theorem}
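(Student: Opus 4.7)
The plan is to reduce the statement to the complete case of Theorem~\ref{thm:4.3} and apply there the classical Ouhabaz invariance criterion. Concretely, equip $D(a)$ with the seminorm $\|u\|_V^2 := \Re a(u) + (\omega+1)\|j(u)\|_H^2$, pass to the quotient by its null space, and complete to obtain a Hilbert space $V$. Sectoriality guarantees that $a$ is continuous for this seminorm, so it extends uniquely to a continuous, $\tilde j$-elliptic form $\tilde a$ on $V \times V$, while $j$ extends to a continuous $\tilde j\colon V \to H$ with dense range. The operator associated with $(\tilde a,\tilde j)$ via Theorem~\ref{thm:4.3} coincides with $A$; this is precisely the reduction that underlies Theorem~\ref{thm:6.1}. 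In this complete setting the appropriate form of Ouhabaz's theorem (valid even when $\tilde j$ is not injective) reads: $T$ leaves $C$ invariant if and only if for every $u \in V$ there exists $w \in V$ with $\tilde j(w) = P\tilde j(u)$ and $\Re \tilde a(w, u-w) \ge 0$. The remaining task is to show that this complete-space criterion is equivalent to conditions~(a)--(c).

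For the implication from invariance to (a)--(c), fix $u \in D(a)$ and apply the criterion in $V$ to produce $w \in V$ with $\tilde j(w) = Pj(u)$ and $\Re \tilde a(w, u-w) \ge 0$. Since the image of $D(a)$ is dense in $V$, choose a sequence $(w_n)$ in $D(a)$ with $w_n \to w$ in $V$. Continuity of $\tilde j$ yields~(a), boundedness of $\|w_n\|_V$ yields~(c), and continuity of $\tilde a$ gives $\Re a(w_n, u-w_n) \to \Re \tilde a(w, u-w) \ge 0$, hence~(b).

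The converse is the substantive direction. Given~(a)--(c), the sequence $(w_n)$ is bounded in $V$: by~(c) the numbers $\Re a(w_n)$ are bounded, and by~(a) so are the norms $\|j(w_n)\|_H$. By reflexivity of $V$, pass to a subsequence converging weakly to some $w \in V$. Weak continuity of $\tilde j\colon V \to H$, combined with the strong convergence in~(a), forces $\tilde j(w) = Pj(u)$. For the form inequality, weak lower semicontinuity of $\|\cdot\|_V^2$ together with the convergence $\|j(w_n)\|_H^2 \to \|\tilde j(w)\|_H^2$ gives $\liminf \Re a(w_n) \ge \Re \tilde a(w)$; combined with $a(w_n, u) \to \tilde a(w, u)$ (weak convergence at the fixed vector $u \in V$) this yields
\[
\Re \tilde a(w, u-w) \;\ge\; \limsup_{n\to\infty} \Re a(w_n, u-w_n) \;\ge\; 0
\]
by~(b). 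Thus the complete-case criterion holds for every $u \in D(a)$, and a density-plus-continuity argument extends it to all $u \in V$. Invariance of $C$ follows.

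The step I expect to be the main obstacle is this weak-compactness argument: one must simultaneously transport the projection identity across (a) and the form inequality across (b) to the weak limit $w$, using reflexivity of $V$ and weak lower semicontinuity of $\Re a(\cdot) + (\omega+1)\|j(\cdot)\|_H^2$ in an essential way. Once these are in place, the rest is the standard translation between the incomplete setting and its completion.
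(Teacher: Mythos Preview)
The paper does not give its own proof of Theorem~\ref{thm:6.2}; it simply cites \cite[Proposition~3.9]{AE2}. Your proposal is therefore a genuine reconstruction, and the strategy you outline---pass to the canonical completion $(V,\tilde a,\tilde j)$ underlying Theorem~\ref{thm:6.1}, invoke the Ouhabaz invariance criterion in the complete case (in the version that allows non-injective $\tilde j$), and translate conditions (a)--(c) back and forth---is exactly the intended route and is correct.

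A few remarks on the execution. Your forward direction is clean. In the converse direction your weak-compactness argument is right: boundedness of $\|w_n\|_V$ follows from (a) and (c), weak limits respect $\tilde j$ because of the strong convergence in (a), and the key inequality $\liminf \Re a(w_n)\ge \Re\tilde a(w)$ indeed follows from weak lower semicontinuity of $\|\cdot\|_V^2$ together with the strong convergence $\|j(w_n)\|_H\to\|\tilde j(w)\|_H$. The final ``density-plus-continuity'' step, extending the criterion from $u\in D(a)$ to arbitrary $u\in V$, is not quite routine continuity: one has to run the same weak-compactness argument once more. Given $u_k\to u$ in $V$ with associated $w_k\in V$, the inequality $\Re\tilde a(w_k,u_k-w_k)\ge 0$ combined with $\|\tilde j(w_k)\|_H=\|P\tilde j(u_k)\|_H$ bounded yields $\|w_k\|_V$ bounded, and then a weak limit $w$ works exactly as before. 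You clearly have this in mind, but it is worth saying explicitly since it is the same mechanism you flagged as the main obstacle.
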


\begin{corollary} \label{cor:6.3}
Assume that for each $u\in D(a)$, there exists a $w\in D(a)$ such that
\[
j(w)=Pj(u)
\qquad \mbox{ and } \qquad 
\Re a(w,u-w)\ge 0 \ . 
\]
Then $T(t) C\subset C$ for all $t > 0$.
\end{corollary}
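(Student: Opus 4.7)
The plan is to deduce this corollary directly from the invariance criterion of Theorem~\ref{thm:6.2} by using a constant approximating sequence. The hypothesis gives, for each $u \in D(a)$, a single element $w \in D(a)$ satisfying $j(w) = Pj(u)$ and $\Re a(w, u-w) \geq 0$. The idea is simply to set $w_n := w$ for every $n \in \N$ and verify that the three conditions a), b), c) of Theorem~\ref{thm:6.2} hold for this constant sequence.

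Concretely, condition a) becomes $\lim_{n\to\infty} j(w_n) = j(w) = Pj(u)$, which is immediate from the assumption $j(w) = Pj(u)$. Condition b) becomes $\limsup_{n\to\infty} \Re a(w_n, u-w_n) = \Re a(w, u-w) \geq 0$, which is exactly the second hypothesis of the corollary. Condition c) becomes $\sup_{n \in \N} \Re a(w_n) = \Re a(w)$, a single finite real number, hence trivially bounded.

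Having verified all three hypotheses of Theorem~\ref{thm:6.2}, we conclude that the semigroup $T$ associated with $(a,j)$ leaves $C$ invariant, i.e. $T(t)C \subset C$ for all $t > 0$. There is no real obstacle here; the corollary is essentially just the statement that one may specialize the approximating sequence in Theorem~\ref{thm:6.2} to be constant when a single $w$ suffices, and this is the most common situation encountered in practice (for example, when checking positivity one frequently exhibits $w = u^+$ or a similar concrete pointwise modification of $u$).
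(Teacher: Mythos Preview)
Your proof is correct and is exactly the intended argument: the paper states Corollary~\ref{cor:6.3} immediately after Theorem~\ref{thm:6.2} without proof, and the implicit reasoning is precisely to take the constant sequence $w_n = w$ and verify conditions a), b), c) of Theorem~\ref{thm:6.2}, just as you did.
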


In this section we want to use the invariance criterion to prove a generation
theorem in the incomplete case which is valid in real Hilbert spaces.
Let $H$ be a real Hilbert space. A \textit{sectorial} form on $H$ is a bilinear
mapping
\[
a\colon D(a)\times D(a)\to \R \ ,
\]
where $D(a)$ is a real vector space, together with a linear mapping $j\colon D(a)\to
H$ with dense image such that there are $\alpha,\omega \ge 0$ such that 
\begin{eqnarray*}
|a(u,v)-a(v,u)| 
& \le & \alpha (a(u)+a(v))+\omega(\|j(u)\|^2_H + \|j(v)\|^2_H)  \\*
& & \hspace*{46mm}
\qquad (u,v \in D(a)) \ .  
\end{eqnarray*}
It is easy to see that the form $a$ is sectorial on the real space $H$
if and only if the sesquilinear extenion $a_\C$ of $a$ to the 
complexification of $D(a)$ together with the $\C$-linear extension of $j$ 
is sectorial in the sense formulated in the beginning of this section.

To such a sectorial form $(a,j)$ we
associate an operator $A$ on $H$ by defining for all $x,y \in H$ that 
$x \in D(A)$ and $Ax=y:\Leftrightarrow$ there exists a sequence
$(u_n)$ in $D(a)$ satisfying
\begin{enumerate}
\addtolength{\itemsep}{0.3\baselineskip}
 \item[a)] 
$\lim\limits_{n\to \infty} j(u_n)=x$ in $H$;
 \item[b)]
$\sup\limits_{n\in\N}a(u_n)<\infty$, and;
 \item[c)]
$\lim\limits_{n\to \infty} a(u_n,v)=(y|j(v))_H$ for all $v\in D(a)$.
\end{enumerate}
Then the following holds.

\begin{theorem} \label{thm:6.4}
The operator $A$ is well-defined and $-A$ generates a holomorphic $C_0$-semigroup
on $H$.
\end{theorem}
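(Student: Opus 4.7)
The plan is to reduce Theorem~\ref{thm:6.4} to the complex case, Theorem~\ref{thm:6.1}, by complexifying the form and then bringing the resulting semigroup back to $H$ via the invariance criterion of Corollary~\ref{cor:6.3}.

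First I would introduce $D(a)_\C := D(a) \oplus i D(a)$ and the sesquilinear extension $a_\C$ of $a$ together with the $\C$-linear extension $j_\C$ of $j$ to $H_\C$. As already remarked just before the theorem, the algebraic condition $|a(u,v)-a(v,u)| \le \alpha(a(u)+a(v)) + \omega(\|j(u)\|^2+\|j(v)\|^2)$ is equivalent to $(a_\C,j_\C)$ being a sectorial form on $H_\C$: from $\Re a_\C(p+iq) = a(p)+a(q)$ and $\Im a_\C(p+iq) = a(q,p)-a(p,q)$ the hypothesis yields $|\Im a_\C(z)| \le \alpha\,\Re a_\C(z) + \omega\,\|j_\C(z)\|^2$, while setting $u=v$ in the hypothesis gives $\alpha\,a(u)+\omega\,\|j(u)\|^2 \ge 0$, so $\Re a_\C(z)+(\omega/\alpha)\|j_\C(z)\|^2 \ge 0$ when $\alpha>0$ (the case $\alpha=0$ forces $a$ to be symmetric and reduces to the complete setting of Section~\ref{ch:Forms}). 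Theorem~\ref{thm:6.1} then yields a well-defined operator $A_\C$ on $H_\C$ and a holomorphic $C_0$-semigroup $T_\C$ on $H_\C$ generated by $-A_\C$.

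Next I would show $T_\C(t) H \subset H$ for $t>0$. The set $H$ is closed and convex in $H_\C$, and from $\|x+iy-x_0\|_{H_\C}^2 = \|x-x_0\|_H^2 + \|y\|_H^2$ for $x,y,x_0 \in H$ the orthogonal projection of $H_\C$ onto $H$ is $P(x+iy) = x$. For $u = u_1 + iu_2 \in D(a)_\C$ I take $w := u_1 \in D(a) \subset D(a)_\C$. Then $j_\C(w) = j(u_1) = P j_\C(u)$ and, using that $a_\C$ is conjugate linear in its second slot,
\[
a_\C(w,u-w) = a_\C(u_1,i u_2) = -i\,a(u_1,u_2) \in i\R,
\]
so $\Re a_\C(w,u-w) = 0$, and Corollary~\ref{cor:6.3} applies.

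Finally, set $T(t) := T_\C(t)|_H$: this is a $C_0$-semigroup on $H$ whose $\C$-linear extension is $T_\C$, so $T$ is holomorphic in the sense defined after Theorem~\ref{thm:4.2}. It remains to identify the operator $A$ of the theorem with $A_\C|_{\{x \in H : A_\C x \in H\}}$; this also delivers well-definedness of $A$, inherited from that of $A_\C$. Given $x,y \in H$ and a complex defining sequence $u_n = p_n + iq_n$, the identities $\Re a_\C(u_n) = a(p_n)+a(q_n)$ and $a_\C(u_n,v) = a(p_n,v)+i\,a(q_n,v)$ for $v \in D(a)$ show that the conditions a)--c) applied to a sequence with limit in $H$ force $(p_n)$ to be a real defining sequence for $(x,y)$; the converse inclusion is trivial via $D(a) \hookrightarrow D(a)_\C$. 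I expect the main work to lie in this splitting of sequences in the final step; the invariance argument is short thanks to the fact that $a_\C(u_1,iu_2)$ is automatically purely imaginary.
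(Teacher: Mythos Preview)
Your proof is correct and follows essentially the same route as the paper: complexify, check that $(a_\C,j_\C)$ is sectorial (the paper first shifts by $\omega$ to form $b$ and then verifies $|\Im b|\le c\,\Re b$, whereas you leave the shift inside the definition of sectoriality---a cosmetic difference), invoke Theorem~\ref{thm:6.1}, use Corollary~\ref{cor:6.3} with $w=\Re u$ to obtain invariance of $H$, and identify the restricted generator with $A$. Your invariance computation and the splitting $u_n=p_n+iq_n$ in the last step spell out exactly what the paper compresses into ``It follows from Corollary~\ref{cor:6.3}'' and ``It is easy to see''.

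One small inaccuracy in your aside: the case $\alpha=0$ does \emph{not} force $a$ to be symmetric (it only bounds the antisymmetric part by $\|j(\cdot)\|_H$), and it certainly does not place you in the complete setting of Section~\ref{ch:Forms}, since $D(a)$ is still just a vector space. This does not damage the argument, since you already invoke the equivalence stated just before the theorem; but the parenthetical remark should be dropped or corrected.
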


\begin{proof}
Consider the complexifications $H_{\C}=H\oplus i H$ and
$D(a_{\C}):=D(a)+iD(a)$. Letting
\[
a_{\C} (u,v):=a(\Re u, \Re v) + a(\Im u, \Im v) + i (a(\Re u, \Im v)+a (\Im u,
\Re v))
\] 
for all $u=\Re u +i \Im u,v=\Re v+i \Im v \in D(a_{\C})$.
Then $a_{\C}$ is a sesquilinear 
form. 
Let $J\colon D(a_{\C})\to H_{\C}$ be the $\C$-linear
extension of $j$. Let
\[
b(u,v)=a_{\C}(u,v)+\omega (J(u) | J(v))_{H_{\C}}\qquad  (u,v \in D(a_{\C}))
\ .
\]
Then 
\begin{eqnarray*}
\Im b(u)&=&a(\Im u, \Re u)-a(\Re u, \Im u),\\
\Re b(u)&=&a (\Re u)+a(\Im u)+\omega(\|j(\Re u)\|^2_H + \|j(\Im u)\|^2_H) \ .
\end{eqnarray*}
The assumption implies that there is a $c > 0$ such that 
$|\Im b(u)| \le c \Re b(u)$ for all $u \in D(a_{\C})$.
Consequently,
$b(u)\in \overline{\Sigma_\theta}$, where $\theta = \arctan c$.
Thus the operator $B$ associated with $b$  generates a $C_0$-semi\-group $S_{\C}$
on
$H_{\C}$. It follows from Corollary \ref{cor:6.3} that $H$ is invariant. The 
part $A_\omega$ of $B$ in $H$ is the generator of $S$, where 
$S(t):=S_{\C}(t)_{|_{H}}$.   
 It is easy  to see that $A_\omega-\omega=A$.
\end{proof}

\begin{remark}\label{rem:6.5}
It is remarkable, and important for some applications, that Condition b) 
in Theorem~\ref{thm:6.1} as well as in Theorem~\ref{thm:6.4}
may be replaced
by
\[
\lim\limits_{n,m\to \infty} a(u_n-u_m)=0  \ . \leqno{{\rm b}^\prime)}
\]
\end{remark}

For later purposes we carry over the invariance criterion Theorem
\ref{thm:5.3} to the real case.

\begin{corollary} \label{cor:6.6}
Let $H$ be a real Hilbert space and $(a,j)$ a sectorial form on $H$ with
associated semigroup $T$. Let $C \subset H$ be a closed convex  set and $P$ the
orthogonal  projection onto $C$. Assume that for each $u\in D(a)$ there exists a
$w \in D(a)$ such that
\[
j(w)=Pj(u) \qquad \mbox{ and } \qquad a (w,u-w)  \ge 0 \ .
\]
Then $T(t)C\subset C$  for all $t > 0$.
\end{corollary}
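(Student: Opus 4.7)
The plan is to reduce the claim to the complex invariance criterion, Corollary~\ref{cor:6.3}, by complexifying as in the proof of Theorem~\ref{thm:6.4}. Form $H_{\C} = H \oplus iH$ and $D(a_{\C}) := D(a) + i D(a)$, and let $a_{\C}$ and $J \colon D(a_{\C}) \to H_{\C}$ be the sesquilinear and $\C$-linear extensions of $a$ and $j$. The computation already carried out in the proof of Theorem~\ref{thm:6.4} shows that $a_{\C}(u) + \omega \|J(u)\|_{H_{\C}}^2 \in \overline{\Sigma_\theta}$ for some $\theta \in (0,\pi/2)$ and every $u \in D(a_{\C})$, so $a_{\C}$ is a sectorial form on the complex Hilbert space $H_{\C}$ in the sense of Theorem~\ref{thm:6.1}. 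Let $T_{\C}$ denote the holomorphic $C_0$-semigroup it generates. A short chase through the construction in the proof of Theorem~\ref{thm:6.4}---comparing $a_{\C}$ with $b = a_{\C} + \omega (J\cdot\,|\,J\cdot)_{H_{\C}}$ via Remark~\ref{rem:4.4}---shows that $T_{\C}(t)_{|H} = T(t)$ for all $t > 0$.

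Next, we regard $C$ as a closed convex subset of $H_{\C}$. Since $C \subset H$ and the $H_{\C}$-inner product restricts to the $H$-inner product, the orthogonal projection $P_{\C}$ onto $C$ in $H_{\C}$ satisfies $P_{\C}(x + iy) = P x$ for all $x, y \in H$, where $P$ is the given projection in $H$.

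We apply Corollary~\ref{cor:6.3} to $(a_{\C}, J)$ and $C \subset H_{\C}$. Given $u = u_1 + i u_2 \in D(a_{\C})$ with $u_1, u_2 \in D(a)$, the hypothesis of the present corollary applied to $u_1$ yields $w_1 \in D(a)$ with $j(w_1) = P j(u_1)$ and $a(w_1, u_1 - w_1) \ge 0$. Setting $w := w_1$, viewed as an element of $D(a_{\C})$, one has $J(w) = j(w_1) = P j(u_1) = P_{\C} J(u)$, and by sesquilinearity
\[
a_{\C}(w, u - w) = a(w_1, u_1 - w_1) - i\, a(w_1, u_2) \ ,
\]
so $\Re a_{\C}(w, u - w) = a(w_1, u_1 - w_1) \ge 0$. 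Corollary~\ref{cor:6.3} therefore yields $T_{\C}(t) C \subset C$ for all $t > 0$, and since $C \subset H$ and $T_{\C}(t)_{|H} = T(t)$, we conclude $T(t) C \subset C$.

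The main obstacle is the bookkeeping needed to confirm $T_{\C}(t)_{|H} = T(t)$: the proof of Theorem~\ref{thm:6.4} routes through an auxiliary semigroup $S_{\C}$ associated with the shifted form $b$, so one must keep track of the rescaling by $e^{\omega t}$ on both the real and the complex side. Once that compatibility is in place, the remaining argument is the short sesquilinearity calculation above, and real invariance follows from complex invariance because $C$ lies entirely in the real part of $H_{\C}$.
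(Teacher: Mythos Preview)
Your proof is correct and follows precisely the route the paper implicitly intends: the corollary is stated without proof, introduced only as ``carry[ing] over the invariance criterion \ldots\ to the real case,'' and your complexification argument---building $(a_{\C},J)$ as in the proof of Theorem~\ref{thm:6.4}, identifying $P_{\C}(x+iy)=Px$, and reducing to Corollary~\ref{cor:6.3}---is exactly that carryover. The bookkeeping you flag for $T_{\C}(t)|_H=T(t)$ is handled just as you indicate, via the rescaling in Remark~\ref{rem:4.4} applied on both sides.
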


We want to formulate a special case of invariance. An operator $S$ on a space
$L^p(\Omega)$ is called
\begin{center}
\begin{tabular}{r@{}l}
{\em positive} & \ if $\Big( f \geq 0$ a.e.\ implies $S f \geq 0$ a.e.$\Big)$ and  \\[5pt]
{\em submarkovian} & \ if $\Big( f \leq \one$ a.e.\ implies $S f \leq \one$ a.e.$\Big)$ .
\end{tabular}
\end{center}
Thus, an operator $S$ is submarkovian if and only if it is positive and
$\|Sf\|_\infty \le \|f\|_\infty$ for all $f \in L^2 \cap L^\infty$.

\begin{proposition} \label{prop:6.7}
Consider the real space $H=L^2(\Omega)$ and a sectorial form $a$ on $H$. Assume
that for each $u \in D(a)$ one has $u\wedge \one \in D(a)$ and
\[
a(u\wedge \one, (u-\one)^+)\ge 0 \ .
\]
Then the semigroup $T$ associated with $a$ is  submarkovian.
\end{proposition}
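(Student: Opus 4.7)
The plan is to apply the real-Hilbert-space invariance criterion, Corollary~\ref{cor:6.6}, to the closed convex set
\[
C := \{ f \in L^2(\Omega) : f \le \one \text{ a.e.} \}.
\]
Its invariance under $T$ is, by definition, exactly the submarkovian property. So the whole task reduces to exhibiting, for each $u \in D(a)$, a witness $w \in D(a)$ satisfying conditions a) and b) of Corollary~\ref{cor:6.6}, and the hypothesis is clearly tailored to deliver one.

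First I would identify the orthogonal projection $P$ of $L^2(\Omega)$ onto $C$ as $Pf = f \wedge \one$. To verify this via the variational characterization $(f - Pf \mid g - Pf) \le 0$ for all $g \in C$, note that $f - f \wedge \one = (f-\one)^+$, which is nonnegative and supported on $\{f > 1\}$; on that set $g - f \wedge \one = g - \one \le 0$, so the inner product is indeed nonpositive. This is the only elementary computation needed.

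Next I would apply Corollary~\ref{cor:6.6} with the natural choice $w := u \wedge \one$. By hypothesis, $w \in D(a)$, and (reading the hypothesis as implicitly identifying $D(a)$-elements with their images under $j$, so that $j$ commutes with the lattice operation with~$\one$) one has $j(w) = j(u) \wedge \one = P j(u)$. Using the pointwise identity $u - u \wedge \one = (u - \one)^+$,
\[
a(w, u - w) \;=\; a(u \wedge \one, \, (u-\one)^+) \;\ge\; 0
\]
by the second hypothesis. Both conditions of Corollary~\ref{cor:6.6} are then met, so $T(t) C \subset C$ for all $t > 0$, which is the submarkovian property.

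I do not expect any genuine obstacle: the proof is essentially a translation of the hypothesis into the language of the invariance criterion, with the only substantive ingredient being the identification of the projection onto $C$ as $f \mapsto f \wedge \one$. The one point requiring a word of comment is the implicit lattice structure on $D(a)$; the cleanest reading is that $j$ is a lattice-compatible map into $L^2(\Omega)$, which is the standing convention making the statement $u \wedge \one \in D(a)$ meaningful.
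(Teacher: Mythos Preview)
Your proposal is correct and follows essentially the same approach as the paper's proof: identify $C=\{f\le\one\}$, recognize its orthogonal projection as $Pf=f\wedge\one$ so that $u-Pu=(u-\one)^+$, and then invoke the invariance criterion with $w=u\wedge\one$. The paper cites Corollary~\ref{cor:6.3} rather than Corollary~\ref{cor:6.6}, but since we are in the real setting these coincide, so this is not a substantive difference.
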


\begin{proof}
The set $C:=\{u\in L^2(\Omega):u\le \one \mbox{ a.e.}\}$ is closed and convex.
The  orthogonal projection $P$ onto $C$ is given by $Pu=u\wedge \one$. Thus
$u-Pu=(u-\one)^+$ and the result follows from Corollary \ref{cor:6.3}.
\end{proof}

We conclude this section by some references to the literature. In many text
books, for example \cite{Dav2}, \cite{Kat1}, \cite{MR}, \cite{Ouh5},
\cite{Tan} one finds the notion
of a
sectorial form $a$
on a complex Hilbert space $H$. By this one understands a sesquilinear form
$a\colon D(a)\times D(a)\to \C$ where $D(a)$ is a dense subspace  of $H$ such that
there are $\theta \in (0,\pi/2)$ and $\omega \ge 0$ such that 
$a(u)+\omega \|u\|^2_H\in \overline{\Sigma_\theta}$ for all $u \in D(a)$. Then
\[
\|u\|_a:=(\Re a(u)+ (\omega+1)\|u\|^2_H)^{1/2}
\]
defines a  norm on $D(a)$.  The form is called \textit{closed} if $D(a)$ is
complete for this norm.
This corresponds  to our complete  case with $V=D(a)$ and $j$ the identity. If
the form is not closed, then one may consider the completion $V$ of $D(a)$.
Since the injection $D(a)\to H$ is continuous for the norm $\| \ \|_a$, it has
a continuous extension $j\colon V\to H$. This extension  may be injective or not. The
form is called \textit{closable} if $j$ is injective. In the literature only for
closable forms generation theorems are given, see \cite{AE2} for  precise
references. The results above show that the notion of closability is not
needed.

There is a unique correspondence between sectorially quasi contractive 
holomorphic  semigroups and closed sectorial forms (see \cite[Theorem~VI.2.7]{Kat1}). 
One  looses uniqueness if one considers forms which are merely
closable or
in our general setting if one allows arbitrary maps $j\colon D(a)\to H$ with dense  image.
However, examples show that in many cases a natural operator is obtained by
this general framework.

\section{Degenerate diffusion}

In this section  we use our tools to show that degenerate elliptic operators 
generate holomorphic semigroups on the real space $L^2(\Omega)$. We start with a
$1$-dimensional
example.

\begin{example} (degenerate diffusion in dimension 1). \label{ex:7.1}
Consider the real Hilbert space $H=L^2(a,b)$, where 
$-\infty \le a<b\le\infty$, and let
$\alpha,\beta,\gamma \in L^\infty_{\loc}(a,b)$ be real coefficients. We assume 
that there is a $c_1 \geq 0$ such that 
\[
\gamma^- \in L^\infty(a,b) \mbox{ and } \beta^2(x) \le c_1 \cdot \alpha(x) \qquad (x \in (a,b)) \ .  
\]
\end{example}

We define the bilinear form $a$ on $L^2(a,b)$ by
\[
a(u,v)=\int\limits^b_a \Big( \alpha(x)u^\prime (x)v^\prime (x)  + \beta
(x)u^\prime (x)v(x)+\gamma(x)u(x)v(x) \Big) \, dx
\]
with domain
\[
D(a)=H^1_c(a,b) \ .
\]
We choose $j \colon H^1_c(a,b) \to L^2(a,b)$ to be the identity map.
Then the form $a$ is \textit{sectorial}, i.e.\ there exist constants $c,\omega \ge 0$,
 such that
\begin{eqnarray*}
|a(u,v)-a(v,u)| & \le & c(a(u)+a(v))+\omega (\|u\|^2_{L^2}+\|v\|^2_{L^2}) \\*
& & \hspace*{46mm}  \qquad (u,v \in D(a))  \ .
\label{eS7;1}
\end{eqnarray*}
\begin{proof}
We  use Young's inequality
\[
|xy|\le \varepsilon x^2+\frac{1}{4\varepsilon}y^2
\]
twice. 
Let $u,v \in D(a)$.
On one hand we have for all $\delta > 0$,
\begin{eqnarray*}
|a(u,v)-a(v,u)|&=& |\int\limits^b_a \beta (u^\prime v- uv^\prime)| \\
&\le& \int\limits^b_a \delta \beta^2(u^{\prime 2}+v^{\prime 2})+\frac{1}{4\delta} (u^2+v^2) \ .
\end{eqnarray*}
On the other hand, for all $c,\omega,\varepsilon > 0$ one has
\begin{eqnarray*}
\lefteqn{
c(a(u )+a(v)) + \omega(\|u\|^2_H+\|v\|^2_H)
} \hspace{10mm}  \\*
& = & \int\limits^b_a c\alpha(u^{\prime 2}+v^{\prime 2}) + c\beta(u^\prime u+v^\prime
v)+(c \gamma + \omega)(u^2+v^2)   \\
& \geq & \int\limits^b_a (c\alpha-\varepsilon \beta^2)(u^{\prime 2}+v^{\prime 2})  -  c^2
\frac{1}{4 \varepsilon}(u^2+v^2)+(c\gamma + \omega)(u^2+v^2) \\
& \ge & \int\limits^b_a (c\alpha-\varepsilon \beta^2)(u^{\prime 2}+v^{\prime
2}) + (\omega-c\|\gamma^-\|_{L^\infty}-\frac{c^2}{4\varepsilon})(u^2+v^2)  \ .
\end{eqnarray*}
Therefore (\ref{eS7;1}) is valid if 
$(c\alpha-\varepsilon \beta^2)\ge \delta \beta^2$ and 
$(\omega-c\|\gamma^-\|_{L^\infty}-\frac{c^2}{4\varepsilon})\ge \frac{1}{4\delta}$.
Since $\beta^2 \le c_1 \alpha$ one can find $\delta,\varepsilon,c,\omega$ such that the
conditions are satisfied.
\end{proof}

As a consequence, letting $A$ be the operator associated with $(a,j)$, we know that
$-A$ generates a holomorphic $C_0$-semigroup $T$ on $L^2(\Omega)$. Moreover,
$T$ is submarkovian.

The condition $\beta^2 \le c_1\alpha$ shows in particular that $\{x\in
(a,b):\alpha(x)=0\} \subset \{x\in (a,b):\beta(x)=0\}$.
This inclusion is a natural hypothesis,  since in general an operator of the form 
$\beta u^\prime$ does not generate a holomorphic  semigroup.

A special case is the \textit{Black--Scholes Equation}
\[
u_t+\frac{\sigma^2}{2}x^2 u_{xx}+ r x u_x-ru=0  \ .
\]
This one obtains by choosing $H=L^2(0,\infty)$,
\[
a(u,v)
=\int\limits^\infty_0(\frac{\sigma^2}{2}x^2 u^\prime v^\prime
+(\sigma^2-r)x u^\prime v+ ruv)  \]
and  $D(a)=H^1_c(0,\infty)$.

It is not difficult to extend the example  above to higher dimensions.

\begin{example} \label{ex:7.2}
Let $\Omega \subset \R^d$  be open and for all $i,j \in \{ 1,\ldots,d \} $
let $a_{ij},b_j,c \in
L^\infty_{\loc}(\Omega)$ be real coefficients. 
Assume 
$c^- \in L^\infty(\Omega)$, $a_{ij}=a_{ji}$  and there exists a $c_1 > 0$ such that 
\[
c_1A(x)-B^2(x) \mbox{ is positive semidefinite}
\]
for almost all $x \in \Omega$,  where
\[
A(x)=(a_{ij}(x)) \mbox{ and } B(x)=\diag (b_1(x),\ldots,b_d(x)) \ .
\]
Define the form $a$ on $L^2(\Omega)$ by
\[
a(u,v)=\int\limits_\Omega
\Big( \sum\limits^d_{i,j=1}a_{ij} (\partial_i u) (\partial_j v)
+\sum\limits^d_{j=1}b_j (\partial_ju) v+cuv \Big)
\]
with domain
\[
D(a)=H^1_c(\Omega) \ .
\]
Then $a$ is sectorial. The associated semigroup $T$ on $L^2(\Omega)$ is
submarkovian.
\end{example}

This and the previous example incorporate Dirichlet boundary conditions. In
the next one we consider a degenerate elliptic operator with Neumann boundary
conditions.

\begin{example} \label{ex:7.3}
Let $\Omega \subset \R^d$ be an open, possibly unbounded subset of $\R^d$. 
For all $i,j \in \{ 1,\ldots,d \} $ let
$a_{ij}\in L^\infty(\Omega)$ be \textit{real} coefficients and assume that 
there exists a $\theta \in (0,\pi/2)$ such that 
\[
\sum\limits^d_{i,j=1} a_{ij}(x) \xi_i \overline{\xi_j} \in \overline{\Sigma_\theta} \qquad (\xi \in
\C^d, \; x \in \Omega) 
\ .
\]
Consider the form $a$ on $L^2(\Omega)$ given by
\[
a(u,v)=\int_\Omega \sum\limits^d_{i,j=1}a_{ij} (\partial_iu)(\partial_jv)  
\]
with domain $D(a)=H^1(\Omega)$.
Then $a$ is sectorial. Let $T$ be the associated semigroup. Our
criteria show right away that $T$ is submarkovian. It is remarkable that even
\[
T_\infty(t)\one_\Omega =\one_\Omega \qquad (t > 0) \ .
\]
For bounded $\Omega$ this is easy to prove, but otherwise more sophisticated
tools
are needed (see \cite[Corollar~4.9]{AE2}).
Note that $T$ extends consistently to semigroups $T_p$ on $L^p(\Omega)$
for all $p \in [1,\infty]$, 
where $T_p$ is strongly continuous for all $p<\infty$ and $T_\infty$
is the adjoint of a strongly continuous semigroup on $L^1(\Omega)$.
\end{example}

We want to  add an abstract result which shows that our solutions are some kind
of \textit{viscosity solutions}. This is illustrated particularly well in the
situation of Example \ref{ex:7.3}.

\begin{proposition} {\rm (\cite[Corollary 3.9]{AE2})}. \label{prop:7.4}
Let $V,H$ be real Hilbert spaces such that
$V \underset{d}{\hookrightarrow} H$.
Let $j \colon V \to H$ be the identity map.
Let $a\colon V\times V\to \R$ be continuous and sectorial. Assume that $a(u)\ge 0$
for all $u \in V$. Let $b\colon V\times V \to \R$ be continuous and coercive. Then
for each $n\in\N$ the form
\[
a+\frac{1}{n}b\colon V\times V \to \R
\]
is continuous and coercive. 
Let $A_n$ be the operator associated with $(a+\frac{1}{n}b, j)$ and $A$ with $(a,j)$. Then
\[
\lim_{n \to \infty} (A_n+\lambda)^{-1}f =  (A+\lambda)^{-1}f  \mbox{ in } H
\]
for all $f \in H$ and $\lambda > 0$. Moreover, denoting by $T_n$ and $T$ the
semigroup generated by $-A_n$ and by $-A$ one has
\[
\lim\limits_{n\to \infty}T_n(t)f=T(t)f \mbox{ in } H
\]
for all $f \in H$.
\end{proposition}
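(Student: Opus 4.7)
The plan has two components. First verify the easy first assertion that $a + \frac{1}{n}b$ is continuous and coercive: continuity is the sum of the continuity estimates, and since $a(u) \ge 0$ while $b$ is coercive with some constant $\alpha > 0$, one has $\Re (a + \frac{1}{n}b)(u) \ge \frac{\alpha}{n}\|u\|_V^2$. This puts $A_n$ in the scope of Theorem~\ref{thm:4.3} (complete case), while $A$ is already covered by Theorem~\ref{thm:6.4}; both associated semigroups are contractive because $a$ and $a + \frac{1}{n}b$ are accretive.

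For the resolvent convergence I fix $f \in H$, $\lambda > 0$ and set $u_n := (A_n + \lambda)^{-1}f \in V$. The defining Lax--Milgram identity
\[
a(u_n, v) + \tfrac{1}{n} b(u_n, v) + \lambda (u_n|v)_H = (f|v)_H \qquad (v \in V),
\]
tested with $v = u_n$ and combined with $a(u_n), b(u_n) \ge 0$, yields the uniform a priori bounds
\[
\|u_n\|_H \le \|f\|_H/\lambda, \qquad a(u_n) \le \|f\|_H^2/\lambda, \qquad \tfrac{1}{n}\|u_n\|_V^2 \le \|f\|_H^2/(\alpha\lambda).
\]
Passing to a subsequence, $u_{n_k} \rightharpoonup x$ weakly in $H$. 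The third bound forces $\tfrac{1}{n_k}|b(u_{n_k}, v)| \le M_b\, \|v\|_V\, \tfrac{\|u_{n_k}\|_V}{n_k} \to 0$ for each fixed $v \in V$; passing to the limit in the identity therefore gives
\[
\lim_{k\to\infty} a(u_{n_k}, v) = (f - \lambda x \mid v)_H \qquad (v \in V).
\]
This already verifies conditions b) and c) of the sequence criterion in Theorem~\ref{thm:6.4} for the candidate pair $(x, f - \lambda x)$.

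The main obstacle is upgrading the weak convergence $u_{n_k} \rightharpoonup x$ to strong $H$-convergence, which is needed for condition a) of the sequence criterion. My plan is to work in the form-completion $V_a$ of $V$ with respect to the norm $\|u\|_a^2 = a(u) + \|u\|_H^2$, in which $(u_n)$ is bounded by the bounds above. Subtracting the Lax--Milgram identities at indices $n$ and $m$ and testing with $v = u_n - u_m$ gives
\[
a(u_n - u_m) + \lambda \|u_n - u_m\|_H^2
= \tfrac{1}{m}\, b(u_m, u_n - u_m) - \tfrac{1}{n}\, b(u_n, u_n - u_m),
\]
and I would control the right-hand side via a weak-$V_a$ compactness/diagonal argument (the essence of the cited \cite[Cor.~3.9]{AE2}), combined with the Cauchy variant b${}'$) of Remark~\ref{rem:6.5}, to conclude that $(u_n)$ is Cauchy in $H$. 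Once strong convergence is available, the sequence criterion identifies $x \in D(A)$ with $Ax = f - \lambda x$, i.e.\ $x = (A+\lambda)^{-1}f$; uniqueness of the limit then forces the full sequence to converge.

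Finally, the semigroup assertion $T_n(t)f \to T(t)f$ follows at once from the resolvent convergence, the uniform contractivity $\|T_n(t)\|, \|T(t)\| \le 1$ (granted by accretivity of $a$ and $a + \tfrac{1}{n}b$), and the Trotter--Kato approximation theorem.
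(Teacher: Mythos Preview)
The paper itself contains no proof of Proposition~\ref{prop:7.4}; it simply cites \cite[Corollary~3.9]{AE2}. So there is nothing in the text to compare against, and your sketch already goes further than the paper does.

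Your a~priori bounds and the weak identification of the limit are correct, and the Trotter--Kato step at the end is fine. The gap is precisely where you locate it, but the device you propose does not close it. From the subtracted identity
\[
a(u_n-u_m)+\lambda\|u_n-u_m\|_H^2
=\tfrac{1}{m}\,b(u_m,u_n-u_m)-\tfrac{1}{n}\,b(u_n,u_n-u_m)
\]
your own estimates give only $\|u_k\|_V\le C\sqrt{k}$, so e.g.\ $\tfrac{1}{m}|b(u_m,u_n)|\le M_b\,C^2\sqrt{n/m}$, which is unbounded as $n\to\infty$ for fixed $m$; no diagonal scheme rescues this, and ``weak-$V_a$ compactness'' by itself says nothing about these $b$-cross terms. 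The displayed identity is therefore not the right tool for the Cauchy property.

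What does work is to exploit the completion $V_a$ you already introduced, but through an energy/lower-semicontinuity argument rather than a Cauchy estimate. Since $(u_n)$ is bounded in $V_a$, a subsequence converges weakly to some $u\in V_a$; because $A$ coincides with the operator associated with the \emph{completed} form $(\tilde a,\tilde j)$ on $V_a$, passing to the limit in the $u_n$-identity (for $v\in V$, then by density for $v\in V_a$) yields $\tilde a(u,v)+\lambda(\tilde j(u)\mid\tilde j(v))_H=(f\mid\tilde j(v))_H$, i.e.\ $\tilde j(u)=(A+\lambda)^{-1}f$. For strong $H$-convergence, test the $n$-th identity with $v=u_n$:
\[
a(u_n)+\tfrac{1}{n}b(u_n)+\lambda\|u_n\|_H^2=(f\mid u_n)_H\longrightarrow(f\mid\tilde j(u))_H=\tilde a(u)+\lambda\|\tilde j(u)\|_H^2 .
\]
Weak lower semicontinuity of $w\mapsto\tilde a(w)$ and of $\|\cdot\|_H$ on $V_a$ then forces $\limsup\|u_n\|_H\le\|\tilde j(u)\|_H$, hence $u_n\to\tilde j(u)$ strongly in $H$ (and, incidentally, $\tfrac{1}{n}b(u_n)\to0$). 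Uniqueness of the limit upgrades this to the full sequence, and your Trotter--Kato argument then gives the semigroup convergence.
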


The point in the result is that the form $a$ is merely sectorial and may be
degenerate. For instance, in Example \ref{ex:7.3} $a_{ij}(x)=0$ is allowed. If
we perturb by the Laplacian, we obtain a coercive form
\[
a_n\colon H^1(\Omega)\times H^1(\Omega) \to \R
\]
given by
\[
a_n(u,v)=a(u,v)+\frac{1}{n}\int\limits_\Omega \nabla u \nabla v \ .
\]
Then Proposition \ref{prop:7.4} says that in the
situation of Example \ref{ex:7.3} for this perturbation
one has $\lim_{n \to \infty} (A_n+\lambda)^{-1}f = (A+\lambda)^{-1}f$  in
$L^2(\Omega)$ for all $f \in L^2(\Omega)$.

\section{The Dirichlet-to-Neumann operator}

The following example shows how the general setting involving non-injective $j$
can be used. It is taken from \cite{AE3} where also the interplay between
trace properties and the semigroup  generated by the Dirichlet-to-Neumann
operator is studied.
Let $\Omega \subset \R^d$ be a bounded open set with boundary 
$\partial \Omega$. Our point is that we do not need any regularity assumption on
$\Omega$, except that we assume that $\partial \Omega$ has a
finite $(d-1)$-dimensional Hausdorff measure. 
Still we are able to define the Dirichlet-to-Neumann operator on
$L^2(\partial \Omega)$ and to show that  it is selfadjoint and generates a
submarkovian semigroup on $L^2(\Omega)$. Formally, the Dirichlet-to-Neumann
operator $D_0$  is defined as follows. Given $\varphi \in L^2(\Gamma)$, one
solves the Dirichlet problem
\[
 \left\{ \begin{array}{r@{}c@{}l}
\Delta u&{}={}&0 \mbox{ in } \Omega\\
u_{|_{\partial\Omega}}&{}={}& \varphi
\end{array} \right.
\]
and defines $D_0\varphi =\frac{\partial u}{\partial \nu}$.
We will  give a precise definition using weak derivatives.
We consider the space $L^2(\partial \Omega):=L^2(\partial\Omega, \cH^{d-1})$
with the $(d-1)$-dimensional Hausdorff  measure $\cH^{d-1}$. Integrals over
$\partial \Omega$ are always taken with respect to $\cH^{d-1}$, those over
$\Omega$ always with respect to the Lebesgue measure.
Throughout this section we only assume that $\cH^{d-1}(\partial\Omega) < \infty$
and that $\Omega$ is bounded.

\begin{definition} (normal derivative). \label{def:8.1}
Let $u \in H^1(\Omega)$ be such that $\Delta u \in L^2(\Omega)$. We   say that
\[
\frac{\partial u}{\partial \nu} \in L^2(\partial \Omega)
\]
if there exists a $g \in L^2(\partial\Omega)$ such that
\[
\int\limits_\Omega (\Delta u) v + \int\limits_\Omega \nabla u \nabla v =
\int\limits_{\partial\Omega} g v
\]
for all $v \in H^1(\Omega) \cap C(\overline{\Omega})$. This determines $g$ uniquely and we let
$\frac{\partial u}{\partial \nu} := g$.
\end{definition}

Recall that for all $u\in L^1_{\loc}(\Omega)$ the Laplacian $\Delta u$ is defined
in the sense of distributions. If $\Delta u = 0$, then $u \in C^\infty(\Omega)$
by elliptic regularity. Next we define traces of a function $u \in H^1(\Omega)$.

\begin{definition} (traces). \label{def:8.2}
Let $u\in H^1(\Omega)$. We let
\begin{eqnarray*}
\tr(u)
= \{g\in L^2(\Omega)& :& \exists\, (u_n)_{n \in \N} \mbox{ in } H^1(\Omega)\cap C(\overline{\Omega})
\mbox{ such that }  \\*
& & \lim\limits_{n\to \infty} u_n=u \mbox{ in } H^1(\Omega) \mbox{ and }  \\*
& & \lim\limits_{n\to \infty} u_n{}_{|_{\partial\Omega}}=g \mbox{ in }
L^2(\partial \Omega)\} \ .
\end{eqnarray*}
\end{definition}

For arbitrary open sets and $u \in H^1(\Omega)$ the set 
$\tr(u)$ might be empty, or contain more than one element.
However, if $\Omega$ is a
Lipschitz domain, then for each $u\in H^1(\Omega)$ 
the set $\tr(u)$ contains precisely one element, which 
we denote by $u_{|_{\partial\Omega}} \in L^2(\partial\Omega)$.
Now we are in the position to define the Dirichlet-to-Neumann
operator $D_0$.
Its domain is given by
\begin{eqnarray*}
D(D_0) := \{\varphi \in L^2(\partial \Omega) 
&:& \exists\, u \in H^1(\Omega) \mbox{ such that }  \\*
& & \Delta u = 0, \; \varphi \in \tr (u) \mbox{ and }
\frac{\partial u}{\partial \nu} \in L^2(\partial \Omega)
\} 
\end{eqnarray*}
and we define
\[
D_0 \varphi = \frac{\partial u}{\partial \nu}
\]
where $u \in H^1(\Omega)$ is such that 
$\Delta u=0$, $\frac{\partial u}{\partial \nu} \in L^2(\partial \Omega)$
and $\varphi \in \tr(u)$. It is part of our result that
this operator is well-defined.

\begin{theorem} \label{thm:8.3}
The operator $D_0$ is selfadjoint and $-D_0$ generates a submarkovian semigroup
on $L^2(\partial \Omega)$.
\end{theorem}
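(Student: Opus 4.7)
The plan is to realise $D_0$ as the operator associated with a pair $(a,j)$ in the framework of Section~6 and to read off all the assertions from the results established there. Take
\[
V := H^1(\Omega) \cap C(\overline \Omega),
\qquad
a(u,v) := \int_\Omega \nabla u \, \nabla v,
\qquad
j(u) := u_{|_{\partial \Omega}} \in L^2(\partial \Omega).
\]
The form $a$ is symmetric and non-negative, so $|a(u,v) - a(v,u)| = 0$ and $a$ is sectorial on the real Hilbert space $L^2(\partial \Omega)$. The map $j$ is linear with dense range: any $\varphi \in C(\partial \Omega)$ extends by Tietze to some $\widetilde \varphi \in C(\overline \Omega)$, which can be adjusted (by convolving on the interior against a mollifier multiplied by a cut-off that is identically $1$ in a neighbourhood of $\partial \Omega$) into an element of $V$ with boundary trace $\widetilde \varphi_{|_{\partial \Omega}} = \varphi$, and $C(\partial \Omega)$ is dense in $L^2(\partial \Omega)$. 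Theorem~\ref{thm:6.4} then provides an operator $A$ on $L^2(\partial \Omega)$ such that $-A$ generates a holomorphic $C_0$-semigroup, and symmetry of $a$ makes $A$ selfadjoint.

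The substance of the proof is the identification $A = D_0$, which simultaneously shows that $D_0$ is a well-defined graph. For the direction $D_0 \subset A$, take $\varphi \in D(D_0)$ with associated harmonic $u \in H^1(\Omega)$. Definition~\ref{def:8.2} yields a sequence $(u_n) \subset V$ with $u_n \to u$ in $H^1(\Omega)$ and $j(u_n) \to \varphi$ in $L^2(\partial \Omega)$. Conditions (a) and (b) of the operator definition are automatic, and (c) is obtained by passing to the limit in $a(u_n, v)$ and invoking Definition~\ref{def:8.1} together with $\Delta u = 0$:
\[
\lim_{n \to \infty} a(u_n, v)
= \int_\Omega \nabla u \, \nabla v
= \Big( \tfrac{\partial u}{\partial \nu} \,\Big|\, j(v) \Big)_{L^2(\partial \Omega)}
\qquad (v \in V).
\]
Hence $\varphi \in D(A)$ and $A \varphi = \partial u / \partial \nu$; since the right-hand side must coincide with $A \varphi$ regardless of which harmonic extension $u$ is chosen, this is precisely the well-definedness of $D_0$.

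For the reverse inclusion $A \subset D_0$, given a defining sequence $(u_n) \subset V$ for $\varphi \in D(A)$ with $\psi := A \varphi$, I would invoke Remark~\ref{rem:6.5} to assume $a(u_n - u_m) \to 0$ and then split $u_n = \widetilde u_n + w_n$, where $w_n \in H^1_0(\Omega)$ is the Lax--Milgram solution of $\int_\Omega \nabla w_n \, \nabla z = \int_\Omega \nabla u_n \, \nabla z$ for all $z \in H^1_0(\Omega)$ (the form $\int \nabla \cdot \, \nabla \cdot$ is coercive on $H^1_0(\Omega)$ by Poincar\'e). The corrected function $\widetilde u_n \in H^1(\Omega)$ is harmonic, and the Lax--Milgram identity together with harmonicity of $\widetilde u_n - \widetilde u_m$ gives
\[
\| \nabla (\widetilde u_n - \widetilde u_m) \|_{L^2}^2
= a(u_n - u_m) - \| \nabla (w_n - w_m) \|_{L^2}^2
\le a(u_n - u_m) \to 0,
\]
so $(\widetilde u_n)$ is Cauchy in $H^1$-seminorm. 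Since $u_n - \widetilde u_n = w_n \in H^1_0(\Omega)$, a diagonal approximation of each $w_n$ by $C^\infty_c(\Omega)$-functions shows that every $j(u_n) = u_n{}_{|_{\partial \Omega}}$ lies in $\tr(\widetilde u_n)$, and this pins down the Cauchy sequence in $H^1(\Omega)$ itself through the convergence $j(u_n) \to \varphi$. The limit $u \in H^1(\Omega)$ is harmonic with $\varphi \in \tr(u)$, and condition (c) combined with Definition~\ref{def:8.1} identifies $\partial u / \partial \nu = \psi$, so $\varphi \in D(D_0)$ with $D_0 \varphi = \psi$.

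Finally, submarkovianity of the semigroup follows from Proposition~\ref{prop:6.7}: for $u \in V$ the truncation $u \wedge \one$ lies in $V$, and $a(u \wedge \one, (u - \one)^+) = \int_\Omega \nabla(u \wedge \one) \, \nabla (u - \one)^+ = 0$ because the two gradients are supported on the disjoint sets $\{u < 1\}$ and $\{u > 1\}$. The main obstacle is the Cauchy step in the inclusion $A \subset D_0$: because $\Omega$ is only assumed to be bounded with $\cH^{d-1}(\partial \Omega) < \infty$, no trace-type Poincar\'e inequality is available, and the upgrade from seminorm Cauchy of $(\widetilde u_n)$ to genuine $H^1(\Omega)$-convergence has to be powered by harmonicity and the boundary-trace information supplied by condition (c).
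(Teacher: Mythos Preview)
Your setup, the inclusion $D_0 \subset A$, and the submarkovianity check all match the paper. The gap is exactly where you flag it: in the direction $A \subset D_0$ you obtain only that $(\widetilde u_n)$ is Cauchy in the $H^1$-\emph{seminorm}, and you never actually upgrade this to convergence in $H^1(\Omega)$. Your closing sentence says the upgrade ``has to be powered by harmonicity and the boundary-trace information supplied by condition~(c)'', but no such mechanism is supplied, and harmonicity alone cannot do it (constants are harmonic with zero gradient, so the seminorm does not control the $L^2$-part). Without this step you cannot produce the limit $u \in H^1(\Omega)$ with $\varphi \in \tr(u)$, and the argument stalls.

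The paper fills this gap with Maz'ya's inequality: for $v \in H^1(\Omega)\cap C(\overline\Omega)$,
\[
\Big(\int_\Omega |v|^q\Big)^{2/q} \le c_M \Big(\int_\Omega |\nabla v|^2 + \int_{\partial\Omega} |v|^2\Big),
\qquad q = \tfrac{2d}{d-1},
\]
valid on any bounded open set with $\cH^{d-1}(\partial\Omega)<\infty$. Using Remark~\ref{rem:6.5} one has $a(u_n-u_m)\to 0$, and condition~(a) gives $\|u_n-u_m\|_{L^2(\partial\Omega)}\to 0$; Maz'ya then forces $\|u_n-u_m\|_{L^2(\Omega)}\to 0$, so the \emph{original} sequence $(u_n)$ is Cauchy in $H^1(\Omega)$. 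No harmonic decomposition is needed. In fact your detour through $\widetilde u_n$ is counterproductive here: $\widetilde u_n = u_n - w_n$ need not lie in $C(\overline\Omega)$, so Maz'ya's inequality (stated for $H^1(\Omega)\cap C(\overline\Omega)$) would not even apply to $\widetilde u_n - \widetilde u_m$ directly, whereas it applies immediately to $u_n - u_m$.

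A minor point: your density argument for $j$ is not quite right as written. A cut-off equal to $1$ near $\partial\Omega$ leaves the function equal to the merely continuous Tietze extension there, which need not be in $H^1$. A clean fix is to extend $\varphi \in C(\partial\Omega)$ to $C(\overline\Omega)$ by Tietze, then further to a compactly supported continuous function on $\R^d$, mollify to get Lipschitz approximants, and restrict to $\overline\Omega$; uniform convergence on $\partial\Omega$ together with $\cH^{d-1}(\partial\Omega)<\infty$ gives convergence in $L^2(\partial\Omega)$.
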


In the proof we use Theorem \ref{thm:6.4}. Here a non-injective mapping $j$ is
needed.
We also need Maz'ya's inequality. Let $q=\frac{2d}{d-1}$. There exists a
constant
$c_M > 0$ such that
\[
\Big(\int\limits_\Omega |u|^q\Big)^{2/q} \le c_M \Big(\int\limits_\Omega | \nabla u|^2 +
\int\limits_{\partial \Omega} | u|^2\Big)
\]
for all $u \in H^1(\Omega) \cap C(\overline{\Omega})$.
(See \cite[Example 3.6.2/1 and Theorem 3.6.3]{Maz} and \cite[(19)]{AW2}.)

\begin{proof}[Proof of Theorem \ref{thm:8.3}]
We consider  real spaces. Our Hilbert space is $L^2(\partial \Omega)$. Let
$D(a)= H^1(\Omega) \cap C(\overline{\Omega})$, $a(u,v)=\int\limits_\Omega \nabla u
\nabla v$ and define $j \colon D(a) \to L^2(\partial \Omega)$ by
$j(u)=u_{|_{\partial\Omega}} \in L^2(\partial \Omega)$.
Then $a$ is symmetric and $a(u)\ge 0$ for all $u \in D(a)$. Thus the
sectoriality condition before Theorem \ref{thm:6.4} is trivially satisfied.
Denote by $A$ the operator on $L^2(\partial\Omega)$ associated with $(a,j)$.
Let $\varphi,\psi \in L^2(\partial\Omega)$.
Then $\varphi \in D(A)$ and $A\varphi=\psi$ if and only if there exists a sequence
$(u_n)_{n \in \N}$ in $H^1(\Omega) \cap C(\overline{\Omega})$ such that 
$\lim\limits_{n\to \infty} u_n{}_{|_{\partial\Omega}} = \varphi$ in $L^2(\partial\Omega)$,
$\lim\limits_{n\to \infty} a(u_n,v) = \int\limits_{\partial\Omega}\psi v_{|_{\partial\Omega}}$ 
for all $v \in D(a)$ 
and $\lim\limits_{n,m\to \infty}\int\limits_\Omega |\nabla (u_n-u_m)|^2=0$
(here we use Remark~\ref{rem:6.5}). 
Now Maz'ya's inequality implies that $(u_n)_{n\in\N}$ is a Cauchy sequence in
$H^1(\Omega)$.
Thus $\lim\limits_{n\to \infty}u_n=u$ exists in
$H^1(\Omega)$, and so $\varphi \in \tr(u)$. Moreover
$\int\limits_{\partial\Omega}\psi v = \lim\limits_{n\to \infty}
\int\limits_\Omega \nabla u_n \nabla v = \int\limits_\Omega \nabla u \nabla v$
for all $v \in H^1(\Omega) \cap C(\overline{\Omega})$. 
Taking as $v$  test functions, we see that $\Delta u = 0$. Thus
\[
\int\limits_\Omega \nabla u \nabla v + \int\limits_\Omega (\Delta u) v =
\int\limits_{\partial\Omega}\psi v
\]
for all $v \in H^1(\Omega)$. Consequently, $\frac{\partial u}{\partial \nu}  =
\psi$. We have shown that $A\subset D_0$.

Conversely, let $\varphi \in D(D_0),D_0\varphi=\psi$.
Then there exists a $u \in H^1(\Omega)$ such that $\Delta u=0$, $\varphi \in \tr(u)$
and $\frac{\partial u}{\partial v}=\psi$. Since $\varphi \in \tr(u)$ there
exists a sequence $(u_n)_{n \in \N}$ in 
$H^1(\Omega) \cap C(\overline{\Omega})$ such that $u_n\to u$ in
$H^1(\Omega)$ and $u_n{}_{|_{\partial\Omega}} \to \varphi$ in $L^2(\partial
\Omega)$. It follows that $j(u_n)=u_n{}_{|_{\partial\Omega}} \to \varphi$ in
$L^2(\partial\Omega)$, the sequence $(a(u_n))_{n\in\N}$ is bounded and 
\[
a(u_n,v)=\int\limits_\Omega \nabla u_n \nabla v \to \int\limits_\Omega \nabla u
\nabla v =
\int\limits_\Omega \nabla u \nabla v + \int\limits_\Omega (\Delta u) v =
\int\limits_{\partial \Omega} \psi v
\]
for all $v\in H^1(\Omega) \cap C(\overline{\Omega})$. Thus, $\varphi \in D(A)$ and
$A\varphi = \psi$ by the definition of the associated operator.
Since $a$ is symmetric, the operator $A$ is selfadjoint. Now the claim follows from Theorem
\ref{thm:6.4}.

Our criteria easily apply and show that semigroup generated by $-D_0$ is
submarkovian.
\end{proof}

\end{document}